\newcommand{\mel}{\MoveEqLeft}
\newcommand{\la}{\langle}
\newcommand{\ra}{\rangle}
\theoremstyle{definition}
\newtheorem{theorem}{Theorem}[section]
\newtheorem{definition}[theorem]{Definition}
\newtheorem{remark}[theorem]{Remark} 
\newtheorem{example*}[theorem]{Example\textsuperscript{*}}
\newtheorem{proposition*}[theorem]{Proposition\textsuperscript{*}}
\newtheorem{corollary}[theorem]{Corollary} 
\newtheorem{corollary*}[theorem]{Corollary\textsuperscript{*}}
\newtheorem{proposition}[theorem]{Proposition} 
\newtheorem{lemma}[theorem]{Lemma}
\newcommand{\psiin}{\psi^{\text{in}}}
\newcommand{\psiout}{\psi^{\text{out}}}
\newcommand{\phin}{\varphi^{\text{in}}}
\newcommand{\phout}{\varphi^{\text{out}}}
\newcommand{\rhoin}{\rho^{\text{in}}}
\newcommand{\rhoout}{\rho^{\text{out}}}
\newcommand{\In}{\text{in}}
\newcommand{\Out}{\text{out}}
\def\Limes#1#2 {\lim\limits_{#1\rightarrow #2}}
\DeclareMathOperator{\Id}{Id}
\newcommand{\We}{\operatorname{We}}
\def\eps{\varepsilon}
\def\R{\mathbb{R}}
\def\B{\mathcal{B}}
\newcommand{\Sp}{\mathbb{S}}
\def\sym{\mathrm{sym}}
\def\N{\mathbb{N}}
\def\XXint#1#2#3{{\setbox0=\hbox{$#1{#2#3}{\int}$ }
\vcenter{\hbox{$#2#3$ }}\kern-.59\wd0}}
\newcommand{\abs}[1]{\left\lvert#1\right\rvert}
\newcommand{\1}{\mathds{1}}
\def\const{\mathrm{const}}
\def\curl{\mathrm{curl} \,}
\newcommand{\K}{\mathcal{K}}
\newcommand{\A}{\mathcal{A}}
\renewcommand{\S}{\mathcal{S}}
\newcommand{\G}{\mathcal{G}}
\newcommand{\cH}{\mathcal{H}}
\newcommand{\J}{\mathcal{J}}
\newcommand{\cC}{\mathcal{C}}
\newcommand{\M}{\mathcal{M}}
\newcommand{\D}{\mathrm{D}}
\newcommand{\grad}{\nabla}
\DeclareMathOperator{\Span}{Span}
\renewcommand{\L}{\mathcal{L}}
\def\norm#1{\lVert #1 \rVert}
\def\scalar#1#2{\langle #1,#2 \rangle}
\renewcommand{\div}{\operatorname{div}}
\newcommand{\dx}{\, \mathrm{d}}
\newcommand{\laplace}{\Delta}
\renewcommand{\div}{\grad\cdot}
\renewcommand{\epsilon}{\varepsilon}
\newcommand{\psiH}{\psi_{\mathrm{S}}}
\newcommand{\VH}{V_{\mathrm{S}}}
\newcommand{\F}{\mathcal{F}}
\newcommand{\din}{\delta^{\text{in}}}
\newcommand{\dout}{\delta^{\text{out}}}
\newcommand{\wein}{\gamma}
\newcommand{\weout}{\We}
\newcommand{\Bin}{\mathcal{D}^{\text{in}}}
\newcommand{\Bout}{\mathcal{D}^{\text{out}}}
\newcommand{\jump}[1]{{\left\llbracket{#1}\right\rrbracket}}
\numberwithin{equation}{section}
\renewcommand{\L}{\operatorname{L}} 
\newcommand{\h}{\operatorname{h}}
\newcommand{\C}{\operatorname{C}} 
\renewcommand{\H}{\operatorname{H}} 
\begin{document}

\title{Steady bubbles and drops in inviscid fluids}

\thanks{LN and CS are funded by the Deutsche Forschungsgemeinschaft (DFG, German Research Foundation) under Germany's Excellence Strategy EXC 2044--390685587, Mathematics M\"unster: Dynamics Geometry Structure, and grant  531098047. DM has received funding from the European Research Council (ERC) under the European Union’s Horizon 2020 research and innovation programme through the grant agreement 862342.
}
\date{\today}

\author{David Meyer}
\email[David Meyer]{david.meyer@icmat.es}

\author{Lukas Niebel}
\email[Lukas Niebel]{lukas.niebel@uni-muenster.de}

\author{Christian Seis}
\email[Christian Seis]{seis@uni-muenster.de}

\address[David Meyer]{Instituto de Ciencias Matemáticas, Calle Nicol\'as Cabrera 13-15, 28049 Madrid, Spain}

\address[Lukas Niebel and Christian Seis]{Institut f\"ur Analysis und Numerik, Universit\"at M\"unster\\
Orl\'eans-Ring 10, 48149 M\"unster, Germany.}

\begin{abstract}
We construct steady non-spherical bubbles and drops, which are traveling wave solutions to the axisymmetric two-phase Euler equations with surface tension, whose inner phase is a bounded connected domain. The solutions have a uniform vorticity distribution in this inner phase and they have a vortex sheet on its surface. 

Our construction relies on a perturbative approach around an explicit spherical solution, given by Hill's vortex enclosed by a spherical vortex sheet.
The construction is sensitive to the Weber numbers describing the flow. At critical Weber numbers, we perform a bifurcation analysis utilizing the Crandall--Rabinowitz theorem in Sobolev spaces on the $2$-sphere. Away from these critical numbers, our construction relies on the implicit function theorem.

Our results imply that the model containing  surface tension is richer than the ordinary one-phase Euler equations, in the sense that for the latter,  Hill's spherical vortex  is unique (modulo translations) among all axisymmetric simply connected uniform vortices of a given circulation.
\end{abstract}
\maketitle  
\allowdisplaybreaks

\section{Introduction}
\subsection{Motivation}

Studying the motion and shapes of bubbles and drops in liquids is a fundamental problem in fluid dynamics. It has received considerable attention over more than half a century both by theoretical physicists and experimentalists. The goal of the present work is to lay some of their studies on sound mathematical ground by rigorously constructing equilibrium bubble and drop solutions to the underlying mathematical model equations.

Asymptotic analyses of nearly spherical bubbles and drops in the physics literature have proceeded largely in parallel. By a bubble, we mean a nearly spherical volume of gas immersed in a liquid. By a drop in a liquid, we refer to a nearly spherical volume of one liquid inside another immiscible liquid of comparable density (and viscosity). We will focus on high Reynolds number flows, which will allow us to consider the two-phase Euler equations with surface tension as the underlying model equations.

When a drop or a bubble starts moving by the influence of gravity, the resting ambient liquid induces vorticity on its boundary. The vorticity is subsequently diffused both inwards and outwards of the drop or bubble, and it homogenizes at high Reynolds numbers \cite{Prandtl1905,Batchelor1956}:
In the interior, it generates an inner circulation of uniform vorticity distribution, which is negligible in the case of a bubble, while in an infinite ambient fluid, the flow becomes potential. Thin viscous boundary layers that typically emerge along the surfaces \cite{Moore1963-fm,harper1968motion} formally disappear in the high-Reynolds number limit. Instead, a vortex sheet forms due to a discontinuity in the tangential velocity components, with its motion driven entirely by circulation rather than gravitational effects. We analyze both scenarios where the circulation inside the vortex body is aligned with and opposite to the circulation on the vortex sheet at the boundary.

By choosing an inviscid model, we also discard any viscous wakes inside and past the moving objects.  Because we gain time reversibility, we may even assume symmetry with respect to the co-moving reference plane. Considering inviscid models for moving bubbles and drops in liquids at high Reynolds numbers was justified for instance in \cite{Moore1963-fm,harper1968motion,Pozrikidis_1989}. The behavior of bubbles and drops in liquids of intermediate or small Reynolds numbers can be significantly different from that in high Reynolds number flows, see for instance \cite{loth2008quasi}.

The two-phase Euler equations describing the translating motion of a bubble or a drop allow for exactly two dimensionless parameters. These are the commonly used \emph{Weber number} $\We$ that measures the inertial forces relative to surface tension forces, and a second quantity $\wein$ measuring the vortex strength relative to surface tension forces, see \eqref{40} below. The latter is sometimes referred to as the \emph{vortex Weber number}, see, e.g., \cite{Hendrickson_Yu_Yue_2022}.

In the situation in which the two dimensionless parameters agree, $\weout=\wein$, the two-phase Euler equations allow for a spherical traveling wave solution: As the vorticity is uniform, the interior flow inside a sphere must be identical to that inside Hill's spherical vortex \cite{Hill1894-ia}. The exterior flow is potential, and there will be a jump discontinuity of the tangential velocity across the surface if the mass densities of the inner and outer phases are different. This traveling wave solution is thus a spherical vortex sheet enclosing Hill's vortex. It will be a central object in our analysis. In order to simplify our language use, we will refer to this object as the \emph{spherical vortex}. 

In the present paper, we rigorously construct slightly non-spherical bubbles and drops in situations when the two Weber numbers are close to each other, $\weout\approx\wein$. On the one hand, we find critical values $(\gamma_k)_{k\in\N}$ at which such objects with identical parameters $\We=\gamma$ bifurcate along a curve from the spherical vortex at $\gamma_k$. On the other hand, away from these critical Weber numbers, we find for any $\gamma$ a small open neighborhood of steady non-spherical solutions with $\We\not=\gamma$. Moreover, we will establish that our non-spherical vortex configurations converge towards a spherical vortex in the limit of vanishing Weber numbers, which is realized in the limit of large surface tensions. The latter applies also to situations, in which either the inner or the outer phase is a vacuum. If the inner phase is a vacuum, our configurations are \emph{hollow vortices}. Our asymptotic results for small Weber numbers confirm earlier predictions in the physics literature on the spheroidal shapes, for instance,  \cite{Moore1959-gv,Harper1972-lf,Pozrikidis_1989}, which are based on formal asymptotics.

\subsection{Mathematical model}
The two-phase Euler equations with surface tension are given by the following free boundary problem:
\begin{alignat}{2} \label{eq:2pheuler}
\rho\left(\partial_t U+U \cdot \nabla U\right)+\nabla P &=0 && \quad  \text { in } \mathbb{R}^3 \backslash \mathcal{S}(t) ,\\
\nabla \cdot U & =0 &&\quad\text { in } \mathbb{R}^3, \label{10}\\
\jump{P } & ={\sigma} H &&\quad \text { on } \mathcal{S}(t),\label{11} \\
\jump{U \cdot n } & =0 &&\quad \text { on } \mathcal{S}(t),\label{12} \\
\nu & = U \cdot n  &&\quad\text { on } \mathcal{S}(t).\label{13}
\end{alignat}
Here, $U$ is the velocity of the fluid, $P$ is the hydrodynamic pressure, and $\rho$ is the mass density of the respective phases. The first two equations model thus the conservation of momentum \eqref{eq:2pheuler} away from the interface $\S(t)$, and the incompressibility  \eqref{10} of the fluid. The Young--Laplace equation \eqref{11}, in which $\sigma >0$ is the surface tension and $H$ is (twice) the mean curvature, relates the difference in pressure to the geometry of the interface. The normal component of the velocity is continuous across the interface \eqref{12}, whose normal velocity $\nu$ is that of the fluid \eqref{13}.
Motivated by bubbles and drops, we will in the following assume that one phase is bounded and connected, occupying an inner domain $\Bin(t)$. The ambient fluid will be denoted by $\Bout(t) = \R^3\setminus \overline{\Bin(t)}$, so that $\mathcal{S}(t)=\partial \Bin(t)=\partial \Bout(t)$. The two phases are characterized by their respective densities,
$$
\rho(t)=\rhoin \1_{\Bin(t)}+\rhoout \1_{\Bout(t)}
$$
with $ \rhoin,\rhoout \ge0 $. We choose the mean curvature $H$  to be positive for convex $\Bin$, and normalize it to $H=2$ if $\Bin$ is the unit ball. The brackets $\jump{f}$ measure the jump of the quantity $f$ across the interface, $\jump{f}=f_{\Bin}-f_{\Bout}$, where $f_{\Bin}$ denotes the limit from the inner phase and $f_{\Bout}$ the limit from the outer phase.  We let the unit normal vector $n$ point from the inner phase into the outer phase, and $s$ is then the velocity of the surface in the direction of that normal. 
While the normal velocity component is necessarily continuous across the interface \eqref{12}, its tangential components will, in general, experience jump discontinuities, turning the surface $\S(t)$ into a vortex sheet. Indeed, a short exercise reveals that the distributional vorticity has a singular part given by $\jump{n\times U}$, concentrated on the surface.

Our goal in this paper is the construction of traveling wave solutions with $\S(t)$ close to a steadily translating sphere of some fixed speed $V \ge 0$ and with vorticity distributed both in the interior $\Bin(t)$ and on its boundary $\S(t)$. Before specifying our mathematical setting, we provide a short overview of the related mathematical literature.

We are concerned with a free boundary problem for the Euler equations, where $\S(t)$ is the free boundary. In the absence of surface tension, it is well known that the interface problem between two inviscid and incompressible fluids is ill-posed due to the Kelvin--Helmholtz instability \cite{Ebin1988-qc,Wu2006-zw}. The two-phase Euler equations with surface tension attracted considerable attention in the past years, primarily in the context of studying water waves. In the irrotational case, that is, if the curl of the velocity vector is zero within the two phases and the vorticity is thus concentrated on the interface, the model can be completely reduced to the interface evolution. Local well-posedness results for the two-phase Euler equations with surface tension were obtained in \cite{Iguchi1997-vu,Ambrose2003-wy,schweizer2005three,coutand2007well,Ambrose2007-qd,Cheng2008-nu}. In \cite{Shatah2008-gd} short-time regularity of the velocity and the interface is proven. Singularity formation in finite time is shown e.g.\ in \cite{castro2012finite,coutand2014finite}.

Regarding the mathematically rigorous phenomenological study of three-dimensional vortex structures, there is a considerable number of results in the one-fluid setting. An important example is Hill's explicit solution for a spherical vortex \cite{Hill1894-ia} from 1894, which we recall in detail and extend to the two-fluid setting below.  Given a fixed radius and circulation, it was proven in \cite{Amick1986-al} to be unique modulo translations. We refer to \cite{Choi2024-mx}, where stability with respect to axisymmetric {(with respect to the $x_3$-axis)} perturbations is derived. The introduction of that paper also gives an extensive overview of the literature on Hill's spherical vortex. Vortex \emph{rings}, which are toroidal vortex configurations that are also observed to emerge out of rising air bubbles as a result of gravity \cite{Lundgren1991-hl,Bonometti2006-zt}, were mathematically constructed in \cite{Norbury1972-cz,Ambrosetti1989-qh}, and the leapfrogging interaction or vortex rings was recently investigated in   \cite{butta2025leapfrogging,Davila2022-ke}. More general vortex filaments are studied in \cite{Jerrard2017-ve}. Traveling ring-shaped vortex sheets in the one-fluid setting and toroidal vortex bubbles in the two-phase setting were recently constructed in  \cite{Meyer2024-xx}.   In the low Reynolds number setting, equilibrium configurations of falling drops were constructed in \cite{bemelmans1981liquid,solonnikov1999problem,eiter2023falling}. 
Travelling wave solutions were also constructed in the context of water waves with surface tensions  \cite{Walsh2014-cf,Kinnersley1976-qz,Crapper1957-qp,Moon2024-on,Stevenson2023-of,shatah2013travelling,Wahlen2006-qf,Disconzi2016-xb}. The phenomenon in which steady fluid equilibria bifurcate as surface tension varies occurs in many other contexts; see, for instance, \cite{Crapper1957-qp,Wegmann2000-ql,Moon2024-on,Baldi2024-bi,Wahlen2006-qf,murgante2024steadyvortexsheetspresence}.

\subsection{Traveling wave solutions}
 
In order to provide a first formulation of our results, we introduce our notion of traveling wave solutions.  Upon a rotation of the coordinates system, we may assume that our bubbles or drops move at speed $V{\geq 0}$  in the direction of $x_3$, assuming that the speed is non-negative is non-restrictive because of the time-reversibility of the problem. Therefore, it is convenient to write the problem in the moving frame as
\begin{equation*}
	u(x)=U(t, x_1,x_2, x_3+V t)-V e_{3}, \; p(x)=P(t, x_1,x_2, x_3+V t), \; \mathcal{S}(t)=\mathcal{S}+t V e_3,
\end{equation*}
where $x = (x_1,x_2,x_3) \in \R^3$ and $t>0$. From now on we will only consider the time-independent quantities $u,p,\S$, which solve the steady two-phase Euler equations 
\begin{alignat}{2}
\rho \, (u \cdot \nabla) u +\nabla p & =0 &&\quad \text { in } \mathbb{R}^3 \backslash \mathcal{S},\label{2a} \\
\nabla \cdot u & =0 &&\quad \text { in } \mathbb{R}^3,\label{2b} \\
\jump{p } & ={\sigma} H &&\quad \text { on } \S, \label{eq:jump} \\
 u \cdot n   & =0 &&\quad \text { on } \S. \label{2d}
\end{alignat}
It is important to notice that in the moving frame, $u$ does not vanish at infinity anymore, instead (assuming the solution is sufficiently regular) we have 
\begin{align*}
\lim_{|x|\rightarrow \infty} u(x)=-Ve_3.
\end{align*}
It is well-known that in the steady setting \eqref{2a}, the pressure can be determined via the Bernoulli equations,
\begin{equation*}
	\frac{\rho^{\mathrm{in}}}{2}\left|u^{\mathrm{in}}\right|^2+p^{\mathrm{in}}=\const, \quad \frac{\rho^{\mathrm{out}}}{2}\left|u^{\mathrm{out}}\right|^2+p^{\mathrm{out}}=\const.
\end{equation*}
We may thus rewrite the interfacial condition \eqref{eq:jump} as
\begin{equation}
	\frac{1}{2}\jump{\rho|u|^2}+{\sigma} H=\const \; \mbox{ on } \S.\label{bjump}
\end{equation}
The constant on the right-hand side is an unknown of the problem. It will not be of importance for our analysis, and thus, it will not get a name.

We aim to construct traveling wave solutions in the class of axisymmetric and swirl-free velocity fields. As can be easily checked, the curl of such vector fields points into the azimuthal direction. 
We restrict our attention to configurations that feature a uniform vorticity distribution in the interior of the drop or bubble, which means that the vorticity  vector $\omega_a=\curl u$ in the interior is given by 
\begin{equation}\label{vort inside}
	\omega_a = {\frac{15}2}a \begin{pmatrix}
		-x_2 \\ x_1 \\0 
	\end{pmatrix} \text{in }\Bin,
\end{equation}
for some vorticity $a \in \R$. The prefactor $15/2$ is introduced here to have a simpler notation later on. We note that $\Bin$ must be invariant under rotations in the azimuthal direction,
\[
x\in \Bin \quad \Longleftrightarrow\quad R(\varphi)x\in\Bin\quad\mbox{for all }\varphi \in [0,2\pi),
\]
where $R(\varphi)\in\R^{3\times 3}$ is the matrix generating rotations of angle $\varphi$ in the $(x_1,x_2)$ plane. We remark that \eqref{vort inside} states that the azimuthal \emph{potential} vorticity is constant, which is a reasonable assumption, because its value is an integral of the Euler equations, see \cite[Section 2.3.3]{Majda2001-ed}. In the ambient fluid, on the other hand, we assume that the flow is irrotational, that is 
\begin{equation}
\curl u=0\quad\text{ in $\Bout$}.\label{liq vort}
\end{equation}
It will be convenient to introduce the vector stream function rather than the axisymmetric scalar stream function to represent the velocity field, see, for instance, \cite[Section 2.4]{Majda2001-ed}. We write
\begin{equation}\label{16}
	u = \curl \psi-Ve_3,
\end{equation}
where $\psi \colon \R^3 \to \R^3$ is a divergence-free  vector field. Because $u$ is axisymmetric without swirl, the identity simplifies to 
\[
u =  - \partial_z \psi_\varphi e_r + \left(\frac1r\psi_{\varphi} +\partial_r\psi_{\varphi}-V\right) e_z,\quad \mbox{and}\quad \partial_z \psi_r = \partial_r\psi_z
\]
where $\psi = \psi_r e_r+\psi_z e_z + \psi_\varphi e_\varphi$ in cylindrical coordinates $(r,z,\varphi) \in [0,\infty) \times \R \times [0,2\pi)$. Using the divergence-free condition for $\psi$, which reads $\partial_r \psi_r + r^{-1}\psi_r +\partial_z \psi_z=0$ in cylindrical coordinates, implies that we can choose the vector stream function purely azimuthal, $\psi_r=\psi_z=0$. The azimuthal function $r\psi_{\varphi}$ coincides thus with the axisymmetric stream function. 

The tangential flow condition \eqref{2d} on the velocity finally yields that the interface $\S$ is a stream surface, $r\psi_{\varphi}- r^2 V/2 = \const$. There is thus no loss of generality to suppose that
\begin{equation}
    \label{15}
    \psi = \frac{V}2re_{\varphi}\quad\mbox{on }\S.
\end{equation}
 Using the identity $\curl\curl = \nabla\div -\Delta$, we eventually notice that the vector stream function solves the elliptic problem 
\begin{equation} \label{eq:vorticity}
	-\Delta \psi = \omega_a \mathds{1}_{\Bin}\quad\mbox{in $\R^3 \setminus \S$.}
\end{equation} 
Moreover, invoking the representation \eqref{16}, we further rewrite the jump condition \eqref{bjump} as
\begin{equation}\label{eq:jumpcond}
	\frac{1}{2}\jump{\rho|\curl \psi-Ve_3|^2}+{\sigma} H=\const \quad \text { on } \S.
\end{equation}

\noindent We remark that \eqref{15}, \eqref{eq:vorticity}, and \eqref{eq:jumpcond} constitute an \emph{overdetermined} boundary problem. Indeed, the elliptic problem \eqref{eq:vorticity} is posed on the interior and exterior domain equipped with both Dirichlet boundary conditions \eqref{15} and the jump condition \eqref{eq:jumpcond} for the curl of $\psi$. For arbitrary domains $\Bin$, both conditions may not be simultaneously verified. Requiring both conditions simultaneously thus imposes a restriction on the shape of the domain. Our goal in the present paper is to find a surface $\S$, so that the combined problem \eqref{15},  \eqref{eq:vorticity}, and \eqref{eq:jumpcond} admits a solution. Once this solution is found, the velocity field $u$ defined in \eqref{16} is a solution to the Euler system \eqref{2a}, \eqref{2b}, \eqref{eq:jump}, and \eqref{2d}.

As announced in the first subsection, one explicit solution can be found for spherical surfaces $\S =\Sp^2_R$, where $\Sp^2_R$ denotes the $2$-sphere of radius $R$. Here, the interior solution describes a spherical vortex derived already by Hill in 1894 \cite{Hill1894-ia}, and the outer flow is potential. The \emph{spherical solution} is given by
\begin{equation} \label{eq:psiHill} 
	\psiH(x) = \begin{pmatrix}
		-x_2 \\ x_1 \\ 0
	\end{pmatrix} \cdot \begin{cases}
		{\frac{3a}4}\left(R^2-\abs{x}^2\right)+\frac{\VH}{2}&\mbox{for }\abs{x}\le R \\[2mm]
		\frac{\VH}{2} \frac{R^3}{\abs{x}^{3}}&\mbox{for }\abs{x}> R,
		    \end{cases}
\end{equation}
where, motivated by \eqref{15}, $\VH$ denotes the speed of the vortex configuration. It can be easily checked that this function solves the Dirichlet problem \eqref{15}, \eqref{eq:vorticity} for any choice of velocity. The precise value of $\VH$ is determined by the jump condition \eqref{eq:jumpcond}, which in spherical coordinates reads 
\begin{equation}
	\frac{1}{2}\jump{\rho|\curl \psiH -\VH e_3|^2} =\frac{9}{8}\left(  a^2 R^4 \rhoin- \rhoout \VH^2\right)  \sin^2 \theta.
    \label{exp jump}
\end{equation}
We explain our notational conventions at the end of this first section, see page \pageref{notation}. The function in \eqref{eq:psiHill} defines a solution to \eqref{15}, \eqref{eq:vorticity}, and \eqref{eq:jumpcond} precisely if
\begin{equation}\label{14}
    \VH = \abs{a} R^2\sqrt{\frac{\rhoin}{\rhoout}}.
\end{equation}
The occurrence of the modulus in this relation is interesting: It shows that there  exist spherical solutions traveling in the direction of $x_3$  (so that $\VH>0$)   both with positive and negative inner circulations $a$. A negative inner circulation has then to be compensated by a counter-rotating outside flow, so that the overall circulation is again positive. In fact, a short computation of $U = \curl \psiH$ shows that the function in \eqref{eq:psiHill} generates a vortex sheet on the surface if $\VH\not=a R^2$. In  particular, even in the one-fluid setting, $\rhoin=\rhoout$, a spherical vortex sheet solution exists if \eqref{14} is satisfied for a negative $a$. For $a$ positive and equal densities, the velocity field is continuous and $\psiH$ is precisely the vector stream function that corresponds to Hill's spherical vortex in the one-fluid setting.

In Section \ref{sec3}, we will non-dimensionalize the problem \eqref{15}, \eqref{eq:vorticity}, and \eqref{eq:jumpcond}. As mentioned in the introduction, we will see that all system constants can be reduced to two dimensionless control parameters: the Weber number $\We$ and the vortex Weber number $\gamma$, given by
\begin{equation}\label{40}
     \weout =\frac{\rhoout V^2R }{\sigma}  , \quad \wein = \frac{\rhoin a^2R^5 }{\sigma}  .
\end{equation}
For near-spherical vortices, $R$ denotes their equivalent radius. The first quantity measures the inertial forces relative to the surface tension forces, while the second one measures the vortex intensity relative to the surface tension forces.  It is readily checked that   condition \eqref{14} under which the spherical function \eqref{eq:psiHill} solves the jump condition \eqref{eq:jumpcond} can now be equivalently stated as
\begin{equation}\label{19}
    \weout =   \wein.
\end{equation}

\noindent In this paper, we will look for solutions to the overdetermined problem \eqref{15}, \eqref{eq:vorticity}, and \eqref{eq:jumpcond}  which are close to the spherical one in \eqref{eq:psiHill}, \eqref{14}. More precisely, we will consider surfaces that can be written  as a graph over $\Sp^2_R$, i.e., for $\eta\in \C^0(\Sp_R^2)$ we set \begin{equation}
	\mathcal{S}=\S_\eta = \left\{ (1+\eta(x))x : x \in \Sp^2_R \right\}. \label{Sgraph}
\end{equation}
This is the boundary of a simply connected bounded set if $\eta$ is continuous and $\eta>-1$ (which is always going to be the case below). Any $\S_\eta$  for which \eqref{15}, \eqref{eq:vorticity}, and \eqref{eq:jumpcond}  are simultaneously solvable yields a traveling wave solution to the two-phase Euler equations.

We focus on axisymmetric functions that experience a reflection symmetry with respect to the reference plane. For simplicity, we call such functions simply \emph{symmetric}. Then $f$ is symmetric precisely if
\begin{equation}\label{43}
f = f(\theta)\quad \mbox{and}\quad f\left(\frac{\pi}2-\theta\right)=f\left(\frac{\pi}2+\theta\right).
\end{equation}
In this notation, symmetric functions are $\pi$-periodic functions of the polar angle $\theta$.

We give now the first version of our main result.

\begin{theorem} \label{thm:bubbles}
    There exists an increasing sequence $\Gamma = (\gamma_k)_{k\in\N}$ of positive numbers diverging to infinity as $k\to \infty$ with the following property:
    \begin{enumerate}
        \item 
        For any $\gamma\in [0,\infty)\setminus \Gamma$ and any $\weout$ close to but different from $\gamma$, there exists a steady, symmetric, and smooth solution of \eqref{15}-\eqref{eq:jumpcond} with a volume of $4/3\pi R^3$ that is nearly spherical. This solution is the only steady, non-spherical, and symmetric smooth solution of that volume in a small neighborhood of the spherical vortex with parameter $\gamma$. 
        
	\noindent Moreover, if $
    \wein = \eps\din$ and $ \weout = \eps\dout$ 
    for some $\delta^\In,\delta^\Out \in [0,\infty)$ with $
    \din\not=\dout$ and $\eps\ll1$, the radial distance $d_{\eps}$ from any point of the  sphere $\Sp_R^2$ to the surface of the constructed object  satisfies the 
     asymptotic expansion  
     \begin{equation}\label{eq:largasym}
         d_{\eps} = \eps R \frac{3}{32}(\din-\dout)\left(3\cos^2\theta-1\right) +o(\eps),
     \end{equation}
	as $\eps\to0$. 
        \item For any $k\in\N$, there exists a unique curve of steady, non-spherical, symmetric, and smooth solution of \eqref{15}-\eqref{eq:jumpcond} with a volume of $4/3\pi R^3$ and with Weber numbers $\gamma=\weout$ close to and bifurcating from the spherical vortex with parameter $\gamma_k$. 
    \end{enumerate}
Furthermore, we have the explicit lower bound $\gamma_1 > 1.861$.
\end{theorem}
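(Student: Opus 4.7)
The plan is to reformulate the overdetermined problem \eqref{15}, \eqref{eq:vorticity}, \eqref{eq:jumpcond} as a nonlinear equation $F(\eta,\weout,\wein)=0$ on a Sobolev space of symmetric (in the sense of \eqref{43}) functions $\eta$ on $\Sp^2_R$, and then apply the implicit function theorem for part~(1) and the Crandall--Rabinowitz bifurcation theorem for part~(2). For each small $\eta$, first solve the elliptic transmission problem \eqref{15}--\eqref{eq:vorticity} on the domains $\Bin_\eta,\Bout_\eta$ bounded by $\S_\eta$, with uniform interior vorticity $\omega_a$ and Dirichlet datum $\psi=(V/2)re_\varphi$; standard theory yields a unique axisymmetric stream function $\psi_\eta$ depending smoothly on $\eta$. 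Then define $F(\eta,\weout,\wein)$ as the residual of the jump condition \eqref{eq:jumpcond}, pulled back from $\S_\eta$ to $\Sp^2_R$ via the diffeomorphism $x\mapsto(1+\eta(x))x$, non-dimensionalized, and normalized by subtracting its mean to absorb the unknown constant and fix the enclosed volume at $4\pi R^3/3$. The spherical vortex \eqref{eq:psiHill}--\eqref{14} corresponds to $F(0,\gamma,\gamma)=0$ for every $\gamma\ge 0$.

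The core of the analysis is the Fr\'echet derivative $\cL_\gamma := D_\eta F(0,\gamma,\gamma)$. By axisymmetry, the reflection symmetry \eqref{43}, and the rotational invariance of the spherical background, $\cL_\gamma$ is diagonalized by the even Legendre polynomials $\{P_{2m}(\cos\theta)\}_{m\ge1}$, with eigenvalues of the form
\begin{equation*}
\lambda_{2m}(\gamma)=\kappa_{2m}-\mu_{2m}\,\gamma.
\end{equation*}
Here $\kappa_{2m}>0$ comes from the linearization of $\sigma H$, which reduces (up to a fixed positive constant) to the spectrum of $-\Delta_{\Sp^2}-2$ and gives $\kappa_{2m}=(2m-1)(2m+2)$, while $\mu_{2m}>0$ is obtained by shape-differentiating the quadratic jump $\tfrac12\jump{\rho|u|^2}$ through a Hadamard-type formula combined with the explicit background \eqref{eq:psiHill}. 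The critical Weber numbers are $\gamma_k:=\kappa_{2k}/\mu_{2k}$; monotonicity of this ratio in $k$, read off from the explicit expressions, yields the increasing sequence $\Gamma=(\gamma_k)_{k\in\N}$ diverging to infinity, and the strict inequality $\gamma_1>1.861$ follows from direct evaluation of the $m=1$ ratio.

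For part~(1), whenever $\gamma\not\in\Gamma$ the operator $\cL_\gamma$ is a Fredholm isomorphism between the relevant symmetric Sobolev spaces, so the implicit function theorem applied to $F$ at $(0,\gamma,\gamma)$ produces, for each $(\weout,\wein)$ near $(\gamma,\gamma)$ with $\weout\ne\wein$, a unique small symmetric $\eta(\weout,\wein)$ solving $F=0$; interior and boundary elliptic regularity then upgrade it to be smooth. For the asymptotic expansion \eqref{eq:largasym}, set $\weout=\eps\dout$, $\wein=\eps\din$ and differentiate the identity $F(\eta(\eps),\eps\dout,\eps\din)=0$ at $\eps=0$; since both Weber numbers vanish there, $\cL_0$ is invertible, and the velocity contribution drops out at leading order so that $\eta_1:=\partial_\eps\eta|_{\eps=0}$ satisfies a purely curvature-based linear equation whose right-hand side is, up to a constant and via \eqref{exp jump}, the factor $(\din-\dout)\sin^2\theta$. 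Expanding $\sin^2\theta=\tfrac23-\tfrac23 P_2(\cos\theta)$, absorbing the $P_0$ part into the Lagrange constant, and inverting on the $P_2$ mode gives $\eta_1\propto(\din-\dout)(3\cos^2\theta-1)$; tracking the constants produces the prefactor $3/32$. For part~(2), at $\gamma=\gamma_k$ the kernel of $\cL_{\gamma_k}$ is one-dimensional and spanned by $P_{2k}$, the cokernel is its orthogonal complement, and the Crandall--Rabinowitz transversality condition reduces to $\partial_\gamma\cL_\gamma|_{\gamma=\gamma_k}P_{2k}=-\mu_{2k}P_{2k}\notin\operatorname{range}\cL_{\gamma_k}$, which holds since $\mu_{2k}\ne 0$; the theorem then produces the desired unique local smooth curve of non-spherical symmetric bifurcating solutions.

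The principal obstacle is the explicit computation of the coefficients $\mu_{2m}$. This requires shape-differentiating $\psi_\eta$ in the direction $\eta=P_{2m}$ via a Hadamard formula; since the Dirichlet datum $(V/2)re_\varphi$ on $\S_\eta$ itself depends on the deformation through the interface geometry, the shape derivative must be decomposed carefully into a normal domain variation, a boundary-data variation, and the pull-back contribution of the jump equation, each evaluated against the explicit spherical stream function \eqref{eq:psiHill}. Obtaining clean closed-form eigenvalues $\mu_{2m}$ is what enables both the sharp lower bound $\gamma_1>1.861$ and the monotonicity needed for $\gamma_k\to\infty$; once these are in hand, the remaining functional-analytic arguments are standard.
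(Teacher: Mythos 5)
Your functional-analytic skeleton (formulation as $F(\eta,\weout,\wein)=0$, implicit function theorem for part~(1), Crandall--Rabinowitz for part~(2), leading-order expansion at $\gamma=0$ via the curvature operator) matches the paper's plan. The critical and fatal gap is your claim that $\cL_\gamma$ is \emph{diagonalized} by $\{Y_{2m}^0\}$ with eigenvalues $\lambda_{2m}(\gamma)=\kappa_{2m}-\mu_{2m}\gamma$. This is false, and the symmetry argument you invoke does not yield it: the spherical background is only axisymmetric about $e_3$ (the translation direction), not $SO(3)$-invariant, so Schur's lemma gives preservation of the azimuthal index $m$ but not of the degree $l$. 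Concretely, the linearized jump term is $\tfrac92\gamma\,\sin\theta\,e_\varphi\cdot(2\Id-\Lambda)(\sin\theta\,\delta\eta\,e_\varphi)$, and the two multiplications by $\sin\theta$ shift the Legendre degree by $\pm1$ each time, so in the basis $\{Y_{2k}^0\}$ this operator is \emph{tridiagonal}, not diagonal (this is exactly the content of the paper's Lemma~\ref{lem:rep}). Consequently there is no formula $\gamma_k=\kappa_{2k}/\mu_{2k}$, the kernel at $\gamma_k$ is not $\Span(P_{2k})$ but a nontrivial full sequence determined by a three-term recurrence, and neither the monotonicity of $(\gamma_k)$ nor the divergence $\gamma_k\to\infty$ nor the bound $\gamma_1>1.861$ is available by ``direct evaluation of the $m=1$ ratio.''

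In the paper, these facts require genuinely different tools: the kernel is shown to be at most one-dimensional by a recurrence analysis (with the Poincar\'e--Perron theorem controlling growth of unstable solutions), the set $\Gamma$ is obtained as the reciprocal spectrum of a compact self-adjoint operator built by conjugating the Jacobi matrix with a diagonal square root, and the explicit lower bound on $\gamma_1$ comes from a Gershgorin circle theorem for infinite matrices applied to that Jacobi matrix — not from a closed-form eigenvalue. The paper even flags in its remarks that computing the $\gamma_k$ explicitly is hard precisely because they are eigenvalues of an infinite Jacobi operator. Your asymptotics \eqref{eq:largasym} and the $\sin^2\theta=\tfrac23-\tfrac23 P_2$ step are fine since at $\gamma=0$ only the diagonal curvature operator survives, but the central structure of the linearization, on which the rest of the argument (Fredholmness, kernel dimension, transversality, and the bound on $\gamma_1$) depends, is misidentified in your proposal.
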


Whether the constructed objects are bubbles or drops depends on the particular choices of the densities $\rhoin$ and $\rhoout$.

Uniqueness even holds true in a large class of Sobolev functions, which we will explain in a more precise version of Theorem  \ref{thm:bubbles}, namely Theorem \ref{T2} in Section \ref{sec3}. We will collect further comments on Theorem \ref{thm:bubbles} in Remark \ref{weber number} below. 

 \begin{figure}[H]
 \begin{minipage}{0.3\linewidth}
\includegraphics{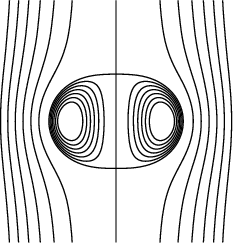}
 \centering{$\We > \gamma$}
 \end{minipage}%
 \hfill
 \begin{minipage}{0.3\linewidth}
\includegraphics{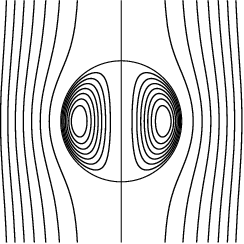}
 \centering{$\We = \gamma$}
 \end{minipage}
 \hfill
 \begin{minipage}{0.3\linewidth}
\includegraphics{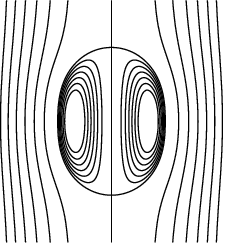}
 \centering{$\We < \gamma$}
 \end{minipage}%
 \caption{Schematic plots of the spherical vortex ($\We = \gamma$) and the perturbations for some $\gamma \notin \Gamma$ depending on the proportion of $\We$ and $\gamma$.}
 \label{fig:streamline}
 \end{figure}

 \begin{figure}[H]
\begin{minipage}{0.3\linewidth}
\includegraphics{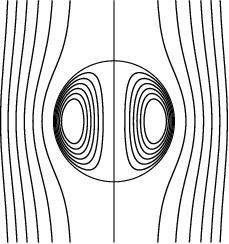}
\centering{$\We = \gamma$}
\end{minipage}%
\hfill
\begin{minipage}{0.3\linewidth}
\includegraphics{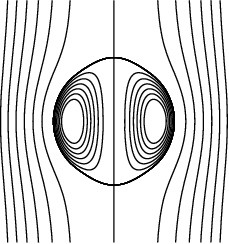}
\centering{$\weout_1$}
\end{minipage}
\hfill
\begin{minipage}{0.3\linewidth}
\includegraphics{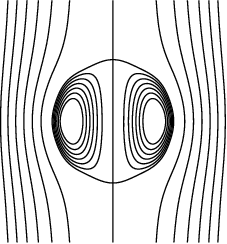}
\centering{$\weout_2$}
\end{minipage}%
\caption{Schematic plots of the spherical vortex ($\We = \gamma$) and of the bifurcations at $\weout_1$ and $\weout_2$.}
\label{fig:bifurc}
\end{figure}

\begin{corollary}
There exist values of $\gamma$ close to the bifurcation set $\Gamma$ for which non-spherical steady vortex solutions  with $\We=\gamma$ exist. In particular, for these values, the spherical vortex is non-unique.
\end{corollary}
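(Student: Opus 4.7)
The plan is to read the corollary off directly from part (2) of Theorem~\ref{thm:bubbles}, so that no new estimates are required. First I would fix a bifurcation value $\gamma_k \in \Gamma$ and invoke part (2) to obtain a one-parameter branch of smooth, symmetric, non-spherical steady solutions of \eqref{15}--\eqref{eq:jumpcond} with $\We = \gamma$ and parameter $\gamma$ in a small punctured neighborhood of $\gamma_k$. Since these Weber numbers lie, by construction, arbitrarily close to $\Gamma$, they supply precisely the values $\gamma$ asserted in the first sentence of the corollary; non-spherical steady vortex solutions with $\We = \gamma$ are already guaranteed at each of them.

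Next, I would argue the non-uniqueness claim by exhibiting a second, spherical solution at the same parameter values. Whenever $\We = \gamma$, that is $\weout = \wein$, the identity \eqref{19} is satisfied, and therefore the explicit function $\psiH$ of \eqref{eq:psiHill} with speed $\VH$ given by \eqref{14} defines a steady solution of \eqref{15}--\eqref{eq:jumpcond} on the sphere $\Sp_R^2$, carrying the required volume $\tfrac{4}{3}\pi R^3$. In the graph representation \eqref{Sgraph}, this spherical configuration corresponds to $\eta \equiv 0$, while the configuration produced in part (2) corresponds to a nontrivial $\eta$. The two configurations are therefore geometrically distinct, so the steady solution with parameters $(\gamma,\We) = (\gamma,\gamma)$ is non-unique, as claimed.

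The only small conceptual point is to ensure that the Crandall--Rabinowitz branch supplied by part (2) is not \emph{vertical} in the variable $\gamma$: I need the $\gamma$-coordinate along the branch to actually attain values distinct from $\gamma_k$, for otherwise the branch would sit entirely over $\gamma = \gamma_k$. I expect this to be encoded directly in the conclusion of part (2) (via the standard transversality hypothesis used when applying Crandall--Rabinowitz), so nothing extra is needed. Even in the degenerate vertical case, non-uniqueness would still hold at the single value $\gamma = \gamma_k \in \Gamma$ itself, which belongs to every neighborhood of $\Gamma$; so in either case the corollary follows without any additional analysis.
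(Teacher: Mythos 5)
Your argument is correct and is precisely the intended one: the paper offers no separate proof of the corollary because it is a direct consequence of part~(2) of Theorem~\ref{thm:bubbles}, exactly as you observe. Your extra remark about the possibly vertical branch is also apt and closes the only conceivable loophole, since $\gamma_k\in\Gamma$ is itself trivially ``close to $\Gamma$'' and non-uniqueness at $\gamma_k$ already suffices.
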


This result is in stark contrast with what is known in the one-fluid setting, where no surface tension is present. There, Hill's spherical vortex solution is unique in the sense that for any given speed $V$ and vorticity $a$, the only solution to the corresponding elliptic problem, namely \eqref{15}, \eqref{eq:vorticity}, and \eqref{eq:jumpcond} with $\rhoin=\rhoout$ and $\sigma=0$, is a translation of  Hill's vortex, see \cite[Theorem 1.1]{Amick1986-al}.

\begin{remark} \label{weber number}
    Some remarks on Theorem \ref{thm:bubbles} are in order. 
    \begin{enumerate}
        \item For infinite surface tension, we deduce from the jump condition \eqref{eq:jumpcond} that the only admissible solution is a surface of constant mean curvature, which in $\R^3$ is a sphere. 
        Our asymptotics \eqref{eq:largasym} moreover indicate that any non-spherical steady solution near the sphere approaches the spherical shape in the limit $\sigma\to \infty$. 
        \item The asymptotics that we find for the shape function in the limit of small Weber numbers \eqref{eq:largasym} agrees with the formal predictions made by Pozrikidis \cite{Pozrikidis_1989} in the case of negligible inner circulation or density, $\gamma=0$. In this situation, we are concerned with a \emph{hollow vortex}. 
        \item To classify the shape of our spheroidal objects, we compute the ratio of the cross-stream axis extension to the parallel axis extension,
        \begin{align*}
            \chi = \frac{R+d_{\eps}(\pi/2)}{R+d_\eps(0)} = 1 + \frac{9}{32}\eps(\dout-\din) +o(\eps),
        \end{align*}
        as $\eps\ll1$.  Hence, the spheroid is oblate if $\dout>\din$ and prolate if $\dout<\din$. This observation is consistent with the physics literature on bubbles and drops \cite{Harper1972-lf}. The resulting shapes are visualized in Figure \ref{fig:streamline}.
        
        In particular, if the inner circulation or density is negligible as for bubbles, $\gamma=\din=0$, and the Weber number is small $\We=\eps\dout\ll1$, the surface $\S$ is always oblate, and we have to leading order
        \[
        		  \chi =  1+ \frac{9}{32} \We + \mathcal{O}(\We^2),
        \]
        as predicted, for instance, in \cite{Moore1959-gv,Harper1972-lf} for high Reynolds number flows. Actually, as pointed out by Harper \cite[p.~77]{Harper1972-lf} in spite of the limitation of this statement to small Weber numbers, ``oblate spheroids are found to be a fair approximation to the true shapes of bubbles for quite large values of $\We$.''
       \item Explicitly calculating the critical Weber numbers $(\wein_k)_{k \in \N}$ in Theorem \ref{thm:bubbles} seems to be a hard problem. It is related to the spectrum of an infinite Jacobi matrix. Here, an analogy to calculating the spectrum of a discrete Schr\"odinger operator can be drawn, which is only known in very special cases.
        
       Numerically, we obtain the first values as listed in Table \ref{tab:gamma_k}, which are plotted in Figure \ref{fig:seq}. The first two critical Weber numbers match the prediction in \cite{Pozrikidis_1989}, where the Weber number is defined with a factor of 2 difference from ours. The rigorous lower bound on $\wein_1$ in Theorem \ref{thm:bubbles} is not optimal.
        \item  As we cannot explicitly calculate the values $(\wein_k)_{k \in \N}$ nor the corresponding solutions of the linear problem we cannot obtain the asymptotic of the bifurcation curve constructed in Theorem \ref{thm:bubbles} explicitly. We refer to Remark \ref{rem:asymptoticG2} for details. Relying on numerical approximations we are able to provide the schematics in Figure \ref{fig:bifurc} for the first two bifurcations.
      \item It is expected that outside the perturbative setting, traveling wave solutions might not exist (or are not physically relevant). For instance, in the case of very large air bubbles (corresponding to $\weout\gg1$ and $\gamma=0$), experiments show that the fluid flow becomes unsteady and turbulent, see for instance \cite{wegener1973spherical}.
  
        \item We finally remark that for small $\weout$, the main effect in the equations is due to the mean curvature. In principle, one could also prove an analogous version of the theorem for small Weber number and small internal circulation for other interior vorticity distributions (or even other governing equations, as long as the stationary equations for a fixed domain are solvable and behave well under perturbations of the boundary). 
    \end{enumerate}
\end{remark}

\begin{table}[H]
\centering
\begin{tabular}{|c|c|c|c|c|c|c|c|c|c|c|}
\hline
$k$ & 1 & 2 & 3 & 4 & 5 & 6 & 7 & 8\\ \hline
$\wein_k$ & 2.20516 & 3.07529 & 3.94492 & 4.81679 & 5.69137 & 6.56836 & 7.44739 & 8.32829 \\ \hline
\end{tabular}
\caption{Table of values of $\gamma_k$ for $k=1, \ldots, 8$ approximated numerically.}
\label{tab:gamma_k}
\end{table}

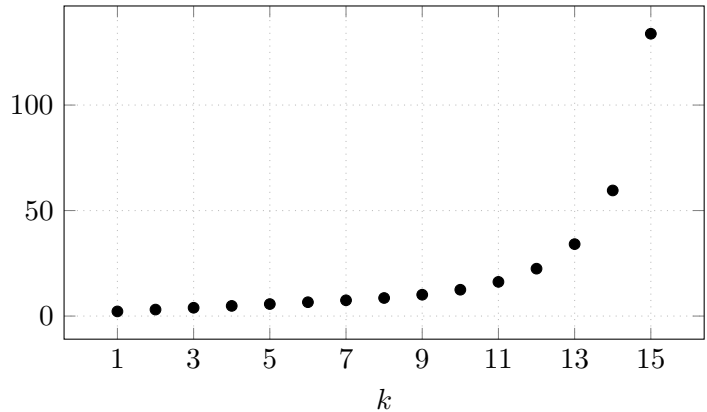
\begin{figure}[H]
         \begin{tikzpicture}
    \begin{axis}[
        width=10cm, height=6cm,
        xlabel={$k$},
        xtick={1,3,5,7,9,11,13,15},
        grid=both,
        grid style={dotted},
        yticklabel style={/pgf/number format/fixed},
        legend style={at={(0.5,-0.15)}, anchor=north, legend columns=-1}
    ]
        \addplot[
            mark=*, only marks
        ] coordinates {
            (1, 2.20516)
            (2, 3.07529)
            (3,  3.9449)
            (4, 4.81679)
            (5, 5.69137)
            (6, 6.56927)
            (7,  7.47469)
            (8,8.56158)
            (9, 10.1182 )
            (10, 12.4794)
            (11, 16.2017)
            (12, 22.4579)
            (13,34.0913)
            (14,59.524 )
            (15,133.795)
        };
    \end{axis}
\end{tikzpicture}
\caption{Values of $\gamma_k$, $k = 1,\dots,15$ approximated numerically.}
\label{fig:seq}
\end{figure}

\subsection*{Notation}\label{notation}
By $\N$ we denote the positive integers and $\N_0 = \N \cup \{ 0 \}$. In the following we write $x = (x_1,x_2,x_3) \in \R^3$ for Euclidean coordinates, $(r,z,\varphi) \in  [0,\infty) \times \R  \times [0,2\pi)$ for cylindrical coordinates and $(s,\theta,\varphi)\in [0,\infty) \times [0,\pi) \times [0,2\pi)$ for spherical coordinates, defined such that
\begin{equation*}
(x_1,x_2,x_3)=(r\cos\varphi,r\sin\varphi,z)=(s\sin(\theta)\cos(\varphi),s\sin(\theta)\sin(\varphi),s\cos(\theta)).
\end{equation*}
A function $f \colon \R^3 \to \R^3$ can be written in euclidean $f = f(x_1,x_2,x_3)$, cylindrical $f = f(r,z,\varphi)$ or spherical coordinates $f = f(s,\theta,\varphi)$. With $e_1,e_2,e_3$, $e_r,e_z,e_\varphi$ and $e_s,e_\theta,e_\varphi$ we denote the corresponding unit vectors, which form an orthonormal frame. Moreover, we write $B = B_1(0)$. The sphere of radius $R>0$ in $\R^3$ is $\Sp^2_R$. 

\section{The perturbative Ansatz}\label{sec3}

We aim to study perturbations of the spherical solution \eqref{eq:psiHill}, whose surfaces are close to the sphere of radius $R$. 
In order to non-dimensionalise the overdetermined problem \eqref{15}, \eqref{eq:vorticity},\eqref{eq:jumpcond}, we start by noticing that the change of variables
\begin{equation*}
    x=R \hat x,\quad \psi = R^3\hat \psi, \quad V = R^2\hat V,\quad \sigma  = R^5\hat \sigma,
\end{equation*}
allows us to restrict our attention to the case of the unit sphere, $R=1$. We may furthermore decompose the vector stream function into its inner and outer contributions: In the interior, we introduce  $\psiin \colon \Bin \to \R^3$ satisfying
\begin{equation} \label{eq:psiin}
    \begin{cases}
	   -\Delta \psiin = \frac{15}2 s \sin \theta \, e_\varphi &\mbox{ in } \Bin,\\[2mm]
	   \psiin =0&\mbox{ on } \S,
    \end{cases}	
\end{equation}
and in the outer domain, we let $\psiout \colon \Bout \to \R^3$ be the solution of
\begin{equation} \label{eq:psiout}
    \begin{cases}
	   -\Delta \psiout = 0 &\mbox{ in } \Bout,\\[2mm]
		\psiout = \frac{1}{2} s \sin \theta \, e_\varphi&\mbox{ on } \S,
    \end{cases}	
\end{equation}
vanishing at infinity. Regarding the well-posedness of the Laplace equation in exterior domains, we refer the reader to \cite{Amrouche1997-rb}. We note that in the present article, we only consider a small perturbation of the exterior unit ball where we obtain well-posedness by means of the Kelvin transform in the proof of Proposition \ref{P1}.

With the above notation, we write
\begin{equation*}
	\psi = \left(a\psiin+\frac{V}{2}s \sin \theta \, e_\varphi \right)\mathds{1}_{\Bin}+ V\psiout \mathds{1}_{\Bout},
\end{equation*}
for any solution to  \eqref{15},\eqref{eq:vorticity} where we consider $\S$ (and thus $\Bin$ and $\Bout$) to be given.

In terms of the new inner and outer stream functions, the  jump condition \eqref{eq:jumpcond} then reads 
\begin{equation} \label{18}
    \frac{\wein}{2}|\curl\psiin|^2-\frac{\weout}{2}|\curl\psiout-e_3|^2+H= \const \quad \mbox{ on } \S,
\end{equation}
where $\wein$ and $\weout$ denote the  Weber numbers  introduced in  \eqref{40}. If $\S$ is a sphere, then we recall from \eqref{19} that the spherical function in \eqref{eq:psiHill} is a solution to this precisely if $\wein  = \weout$.

As described in the introduction, we pursue a perturbative ansatz. We consider symmetric and Sobolev regular shape functions $\eta\in \H^{\beta}(\mathbb{S}^2)$ with $\beta \in [0,\infty)$, see Section \ref{sec:sobolev} for the definition of these spaces, and write
\begin{equation*}
	\S=\S_\eta = \left\{ (1+\eta(x))x : x \in \Sp^2 \right\}
\end{equation*}
for the graph of $\eta$ over $\Sp^2$.  We recall that by a \emph{symmetric} function, we understand a function that depends only on the polar angle  $\eta = \eta(\theta)$ and that is reflection invariant across the reference plane, $\eta(\pi/2-\theta)=\eta(\pi/2+\theta)$, cf.~\eqref{43}. Sobolev spaces on the sphere will be explained in Subsection \ref{sec:sobolev}. If $\eta$ is continuous and $\eta>-1$, the surface $\S_{\eta}$ is the boundary of a simply connected bounded set $\Bin_{\eta}$.
We shall suppose that its volume  is identical to that of the unit ball,
\begin{equation}\label{42}
    |\Bin_{\eta}| = \frac43 \pi.
\end{equation}
The set of admissible \emph{small} shape functions is thus given by 
\[
\M^{\beta} = \M^{\beta}_{c_0} = \left\{\eta\in \H^{\beta}(\Sp^2)\mbox{ symmetric}: \eqref{42} \mbox{ holds and } \|\eta\|_{\H^{\beta}(\Sp^2)}\le c_0\right\},
\]
for some suitably chosen small constant $c_0$.

We furthermore denote by   $\Bout_\eta $ the set outside  of $\S_{\eta}$ and by $H_\eta \colon \S_\eta \to \R$ the mean curvature of $\S_\eta$. We indicate the dependence of $\psiin$ and $\psiout$ on $\eta$ by writing $\psiin_\eta$ and $\psiout_\eta$, respectively. Finally, we write $\chi_\eta \colon \Sp^2 \to \S_\eta$ for the parametrization of the interface,
\begin{equation*}
    \chi_\eta(x) = (1+\eta(x))x,
\end{equation*} 
for any $x \in \Sp^2$. Recall that at any point $x \in \Sp^2$, the vector $x$ is the outer unit normal vector. 

Pulling the jump equation \eqref{18} back to the unit sphere, its left-hand side turns into the functional  
\begin{align}
\F(\wein,\weout,\eta) = \frac{\wein}2 \abs{(\curl \psiin_\eta)\circ \chi_\eta }^2 - \frac{\weout}2 \abs{(\curl \psiout_\eta)\circ \chi_\eta- e_3}^2   +   H_\eta \circ \chi_\eta.  \label{eq:G}
\end{align}
Traveling wave solutions then correspond to configurations for which $\mathcal{F}$ is constant,
\begin{equation}
    \label{41}
\F(\wein,\weout,\eta)=\const,
\end{equation}
see \eqref{eq:jumpcond}. We will study this identity in suitable function spaces in which we mod out constants. Then, finding traveling wave solutions will correspond to constructing zeros of $\mathcal{F}$.
We will occasionally write
\begin{equation} \label{eq:G2}
\F(\wein,\weout,\eta)  =  \mathcal{J}(\wein,\weout,\eta)+  \mathcal{C}(\eta),
\end{equation}
in which $\mathcal{J}$ denotes the quadratic jump term and $\mathcal{C}$ denotes the curvature term.

Clearly, if $\wein=\weout$ and $\eta=0$, the jump term is vanishing by \eqref{exp jump} and \eqref{19}, and thus, for any value of $\wein=\weout$ we recover the spherical vortex \eqref{eq:psiHill} with
\begin{equation}
\F(\wein ,\wein, 0) = \cC(0)=2 \quad  (= \const).\label{trivial 0}
\end{equation}
In Theorem \ref{thm:bubbles}, we analyze the bifurcation of \eqref{trivial 0} with respect to the parameter $\wein$: We identify a sequence of points $\Gamma = (\wein_k)_{k \in \mathbb{N}}$ through which a bifurcation curve passes along which non-spherical solutions of \eqref{41} with $\We = \gamma$ exist. The bifurcation is found by an application of  Crandall and Rabinowitz's bifurcation theorem \cite{Crandall1971-wb}. Complementary to this,  we may invoke the implicit function theorem away from the bifurcation points $\Gamma$ to construct a family of non-spherical solutions having non-identical parameters, $\wein\not=\weout$. Near the limiting case $\wein=\weout=0$, we obtain the leading order asymptotic for the shape function.
\medskip

The precise version of Theorem \ref{thm:bubbles} reads as follows:
 \begin{theorem}
     \label{T2}
Let $\beta>2$. There exists $c_0 = c_0(\beta)>0$ and a universal increasing sequence $\Gamma=(\gamma_k)_{k\in\N}$ of positive numbers diverging to infinity as $k\to \infty$ with the following property:
\begin{enumerate}
    \item For any  $\gamma\in [0,\infty) \setminus \Gamma$ and any $\weout$ close to but different from $\gamma$, there exists a unique nontrivial solution $\eta = \eta(\wein,\weout)\in \M^{\beta}_{c_0}$ to the jump equation \eqref{41}. This solution is smooth.
    \noindent
    Moreover,  if $\wein=\eps\din$ and $\weout=\eps\dout$ for two nonnegative constants $\din\not=\dout$ and a small parameter $\eps$, we have the asymptotic expansion
    \[
    \eta_{\eps} = \eps\frac{3}{32}(\din-\dout)\left(3\cos^2\theta-1\right)+o(\eps),
    \]
    as $\eps\to0$.
    \item For any $k\in\N$, there exists a unique local curve $s\mapsto \wein(s)$ passing through $\gamma_k$ and there are associated nontrivial shape functions $\eta(s)\in\M_{c_0}^\beta$ such that the jump equation \eqref{41} is solved with Weber numbers $(\wein(s),\wein(s))$. These shape functions are smooth.
\end{enumerate}
Furthermore, we have the explicit lower bound $\gamma_1 \ge \frac{60060}{16510+ 2574 \sqrt{10}+945 \sqrt{65}} \approx 1.861 \ldots$.
\end{theorem}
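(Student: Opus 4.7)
The plan is to recast the jump equation \eqref{41} as a zero-set problem for a smooth nonlinear map and to analyze its linearization at the trivial branch. Modulo constants, we consider
\[
    F : [0,\infty)^2 \times \M^\beta_{c_0} \to \H^{\beta-2}(\Sp^2)/\R, \qquad F(\wein,\weout,\eta) := \F(\wein,\weout,\eta) \bmod \const,
\]
restricted to symmetric functions. By \eqref{trivial 0}, the curve $\{(\gamma,\gamma,0)\}_{\gamma\geq 0}$ lies in $F^{-1}(0)$. The first task is to verify that $F$ is real-analytic in a neighborhood of this trivial branch; this rests on shape calculus for the interior Dirichlet problem \eqref{eq:psiin} and for the exterior Dirichlet problem \eqref{eq:psiout}, the latter transported to a bounded domain via the Kelvin transform as indicated after \eqref{eq:psiout}, together with the smooth dependence of the mean curvature on the parametrization $\chi_\eta$.

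The heart of the argument is the spectral analysis of $L(\gamma) := D_\eta F(\gamma,\gamma,0)$. Splitting $\F = \J + \cC$ as in \eqref{eq:G2}, the curvature contribution is the sphere's Jacobi operator, $D_\eta \cC(0)\eta = -(\Delta_{\Sp^2}+2)\eta$, diagonal in the basis of even Legendre polynomials $\{P_{2k}(\cos\theta)\}_{k\geq 1}$ with eigenvalues $d_k := 2k(2k+1)-2$. The jump contribution is obtained by differentiating \eqref{eq:psiin}--\eqref{eq:psiout} at $\eta=0$, using that $\chi_\eta$ induces a linear perturbation of the Dirichlet data, and substituting into the linearization of \eqref{exp jump}. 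Owing to the axial and reflection symmetries, the resulting operator preserves symmetric mean-zero functions; in the Legendre basis it is represented by a bounded self-adjoint operator $T$ of Jacobi type, so that
\[
    L(\gamma) = D - \gamma T,\qquad D = \mathrm{diag}(d_1,d_2,\ldots),
\]
and the critical set $\Gamma = (\gamma_k)_{k\in\N}$ consists precisely of those $\gamma$ for which $L(\gamma)$ has nontrivial kernel. Discreteness, simplicity, monotone ordering, and divergence of $\gamma_k$ to infinity then follow from the variational min-max characterization together with the fact that $d_k$ grows quadratically while $T$ is bounded.

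For part (1), when $\gamma \notin \Gamma$, $L(\gamma)$ is a topological isomorphism from the symmetric mean-zero subspace of $\M^\beta$ onto its image, and the implicit function theorem applied to $F$ with $\wein=\gamma$ held fixed and $\weout$ as parameter yields a unique smooth local branch $\weout \mapsto \eta(\gamma,\weout)$ through the spherical vortex; an elliptic bootstrap on \eqref{41} upgrades $\eta$ to $\C^\infty$. For part (2), we apply Crandall--Rabinowitz \cite{Crandall1971-wb} at each $\gamma=\gamma_k$: the kernel of $L(\gamma_k)$ is one-dimensional, spanned by an eigenvector $e_k$; the range has codimension one; and the transversality condition $\partial_\gamma L(\gamma_k) e_k = -T e_k \notin \mathrm{Range}(L(\gamma_k))$ reduces to $\langle T e_k, e_k\rangle \neq 0$, which is immediate from the strict positivity of $T$ on the span of $e_k$.

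The small-Weber asymptotic \eqref{eq:largasym} is then obtained by specializing the part-(1) branch to $\wein=\eps\din$, $\weout=\eps\dout$: at $\eps=0$, $L(0)=-(\Delta_{\Sp^2}+2)$ is diagonal and strictly positive on symmetric mean-zero functions, and inverting the linear-in-$\eps$ part of $\J$, computed explicitly from \eqref{exp jump}, projects exactly onto the $P_2$ mode with coefficient $\tfrac{3}{32}(\din-\dout)$. The explicit lower bound on $\gamma_1$ follows from the variational formula $\gamma_1 = \inf_{v\neq 0}\langle Dv,v\rangle/\langle Tv,v\rangle$ combined with an explicit upper estimate for $T$ on a low-dimensional principal subspace, plus an interlacing argument bounding the tail; the resulting algebraic constant yields the value $\tfrac{60060}{16510+2574\sqrt{10}+945\sqrt{65}}\approx 1.861$. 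The principal obstacle in the whole argument is the first step of the spectral analysis, namely producing a closed-form Jacobi representation of $T$: this requires carefully disentangling the boundary-data variation coming from $\chi_\eta$ from the domain variation itself and summing the resulting contributions against the Hill velocity field on $\Sp^2$. Once $T$ is in hand, the remainder of the proof is a fairly standard application of the implicit function theorem and the Crandall--Rabinowitz bifurcation theorem.
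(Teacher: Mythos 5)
Your overall strategy matches the paper: express the linearization $L(\gamma)$ in the basis of even Legendre polynomials, observe that the curvature part is diagonal with quadratically growing eigenvalues while the jump part is tridiagonal, apply the implicit function theorem off the bifurcation set, and use Crandall--Rabinowitz at each $\gamma_k$ with symmetry and positivity furnishing the transversality condition. However, there are three concrete inaccuracies that, as written, leave genuine gaps.

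First, your claim that $T$ is a \emph{bounded} Jacobi operator is false. Computing the coefficients from the recurrence relations for associated Legendre polynomials, the diagonal and off-diagonal entries of the jump operator grow \emph{linearly} in the mode index (they behave like $k$ for large $k$). The conclusion you draw—compactness of the perturbation relative to the curvature term, hence discreteness of $\Gamma$ and $\gamma_k\to\infty$—is still correct, but for a weaker reason: the jump operator loses one derivative while the curvature operator loses two, so $D^{-1/2}T D^{-1/2}$ is compact even though $T$ itself is unbounded on $\ell^2$. You need to state and prove the actual mapping properties (the paper conjugates with the square root of the diagonal part precisely to get a compact self-adjoint operator before invoking the spectral theorem).

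Second, simplicity of the kernel does \emph{not} follow from the min-max characterization of the eigenvalues: a self-adjoint compact operator may have degenerate eigenvalues, and min-max gives you ordering but not multiplicity one. The paper deduces that $N(L(\gamma))$ is at most one-dimensional from the three-term recurrence defining a kernel vector: once the off-diagonal entries $C_k$ are nonzero, the whole sequence is determined by its first component. This Jacobi-matrix argument (together with a Poincaré--Perron asymptotic to control membership in $\h^{\alpha+2}$) is the step that produces one-dimensional kernels, and your write-up omits it entirely. Without it, assumption (3) of Crandall--Rabinowitz is not verified.

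Third, the claimed derivation of the constant $\tfrac{60060}{16510+2574\sqrt{10}+945\sqrt{65}}$ by a variational estimate plus an interlacing/tail argument is not the route the paper takes and would not naturally land on this algebraic expression. The paper applies a Gershgorin-type circle theorem for infinite tridiagonal matrices (valid because the eigenvectors are shown to lie in $\ell^1$) to the conjugated compact operator $D^{-1/2}T D^{-1/2}$; the constant is then $\kappa(2)$, the maximal row-plus-diagonal bound, attained at the second mode. If you wish to keep a variational argument, you must also prove strict positivity of the Rayleigh quotient on the relevant cone and control the tail carefully; the interlacing phrase by itself is not a proof. Relatedly, the transversality step needs a small repair: $T$ is not positive definite in general, and positivity of $\langle Te_k,e_k\rangle$ has to be deduced from the eigenvalue relation $De_k=\gamma_k Te_k$ together with positive definiteness of the \emph{curvature} operator $D$, not from positivity of $T$.
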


In view of the perturbative setting presented in the present section, the above theorem readily implies Theorem \ref{thm:bubbles}. Notice that our rescaling implies that $d_{\eps}  =R\eta_{\eps}$.

\medskip

Our analysis is structured as follows. We introduce the correct functional analytic setup in Section \ref{sec:shape}, where we also prove the Fr\'echet differentiability of $\J$ and $\cC$ and give formulas for their derivatives. In Section \ref{sec:proofs} we provide the proof of Theorem \ref{T2}. 

\section{Analysis of the functional $\F$}
\label{sec:shape}

In this section, we study the regularity properties of the functional $\F$ and derive and analyze its linearization. First,  we introduce the underlying function spaces for functions $\eta \colon \Sp^2 \to \R$ and recall some important facts on Sobolev spaces on the sphere.

\subsection{Sobolev spaces on the sphere.}
\label{sec:sobolev}

We denote by $\L^2(\mathbb{S}^{2})$ the space of square-integrable functions on the sphere equipped with the uniform measure $\dx \sigma(x) = \sin(\theta) \dx \varphi \dx \theta$ (which is the same as the Hausdorff measure on $\mathbb{S}^2$ up to a constant factor) and we write $\langle \cdot, \cdot \rangle$ for the induced $\L^2$ scalar product. 

Our analysis is based on the fact that spherical harmonics  $\{ Y_l^m : l\in \N_0, \, -l \le m \le l \} $ form an orthonormal eigenbasis of the Laplace--Beltrami operator on the sphere $\Delta_{\Sp^2}$ with respect to the $\L^2(\Sp^2)$ scalar product. The corresponding eigenvalues $-l(l+1)$ have the multiplicity $2l+1$. We have the expansion
 \begin{equation}
f=\sum_{l=0}^{\infty} \sum_{m=-l}^{l} \langle f, Y_l^m \rangle  Y_{l}^m\label{eq:orthbasis},
\end{equation}
for any $f \in \L^2(\mathbb{S}^{2})$. We recall that  spherical harmonics can be expressed as
\begin{equation} \label{eq:spherical}
Y_l^m(\theta,\varphi) = c_{l,m} P_{l}^m(\cos \theta) e^{i m \varphi},
\end{equation}
where $c_{l,m}= \sqrt{\frac{(2 l+1)}{4 \pi} \frac{(l-m)!}{(l+m)!}}$ are positive constants and $P_l^m$ are the associated Legendre polynomials. We refer the reader e.g.\ to \cite{Muller2014-up} for background reading.
 
For $\beta>0$, we define the Sobolev space $\H^\beta(\mathbb{S}^{2})$  as the space of all functions $f \in \L^2(\mathbb{S}^{2})$  
with
\begin{equation*}
	\|f\|_{\H^\beta(\mathbb{S}^{2})}^2=\sum_{l = 0}^{\infty} \sum_{m=-l}^{l}\left(1+l\right)^{2\beta}\left|\langle f, Y_l^m \rangle\right|^2<\infty.
\end{equation*}
These spaces can be equivalently defined via smooth charts \cite{Grosse2013-rl}, and thus, they   arise as the trace spaces of  $\H^{\beta+\frac{1}{2}}(B_1(0))$, cf.~\eqref{20}. For integer exponents $\beta\in\N$, they coincide with the classical Sobolev spaces defined via differentiation on the manifold. 

For notational convenience, we introduce a subspace of  $\H^\beta(\Sp^2)$ that reflects the symmetric setting we restrict to in \eqref{43}.

\begin{definition} \label{def:Hsym}
Let $\beta \ge 0$. We define 
\begin{equation*}
	 \H^\beta_{\sym}(\Sp^2):=\left\{ f \in \H^\beta(\Sp^2) : f = f(\theta) \mbox{ with } f\left( \frac{\pi}{2}-\theta \right) = f\left(\frac{\pi}{2}+\theta \right)  \right\}  ,
	\end{equation*}
	the subspace of all axisymmetric functions in $\H^\beta(\Sp^2)$, which are symmetric in $x_3$. 
\end{definition}

The following characterization will be beneficial for our analysis.

\begin{lemma}\label{lem:charX}
	For all $\beta \ge 0$ we have
    \begin{align}
	\H^\beta_{\sym}(\Sp^2) := \left\{ f \in   \H^\beta(\Sp^2) : \langle f, Y^m_l \rangle = 0 \mbox{ if $l$ is odd or } m \not= 0 \right\}. \label{eq:defX}
\end{align}
\end{lemma}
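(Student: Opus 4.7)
My plan is to verify the two defining properties of $\H^\beta_{\sym}(\Sp^2)$ separately, translating axisymmetry into the vanishing of coefficients with $m\neq 0$ and the reflection symmetry $f(\theta)=f(\pi-\theta)$ into the vanishing of coefficients with $l$ odd. Since for any $\beta\ge 0$ the space $\H^\beta(\Sp^2)$ embeds into $\L^2(\Sp^2)$ and is described coefficient-wise, both directions of each implication reduce to straightforward computations with the basis \eqref{eq:spherical}, and the subspace defined by linear vanishing conditions on the coefficients is automatically closed in $\H^\beta$.

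First, I would handle axisymmetry. If $f\in\L^2(\Sp^2)$ is axisymmetric, i.e.\ invariant (in the $\L^2$ sense) under the azimuthal rotations $(\theta,\varphi)\mapsto(\theta,\varphi+\phi)$, then by Fubini and the formula \eqref{eq:spherical}
\[
\langle f,Y_l^m\rangle=c_{l,m}\int_0^\pi P_l^m(\cos\theta)\,\sin\theta\,\Big(\int_0^{2\pi} f(\theta,\varphi)\,e^{-im\varphi}\dx\varphi\Big)\dx\theta,
\]
and the inner integral vanishes for $m\neq 0$ since $f$ is independent of $\varphi$. Conversely, if $\langle f,Y_l^m\rangle=0$ for all $m\neq 0$, then the $\L^2$-convergent expansion \eqref{eq:orthbasis} reduces to $f=\sum_l\langle f,Y_l^0\rangle Y_l^0(\theta)$, whose partial sums depend only on $\theta$, hence so does $f$ as an $\L^2$-class.

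Second, among axisymmetric functions I would use the parity of Legendre polynomials, $P_l(-x)=(-1)^l P_l(x)$, to get
\[
Y_l^0(\pi-\theta)=c_{l,0}P_l(-\cos\theta)=(-1)^l\,Y_l^0(\theta).
\]
Hence the reflection operator $Rf(\theta):=f(\pi-\theta)$ acts diagonally on the basis $\{Y_l^0\}_{l\in\N_0}$ with eigenvalues $(-1)^l$, so the relation $f(\pi/2-\theta)=f(\pi/2+\theta)$, equivalent to $Rf=f$, holds if and only if $\langle f,Y_l^0\rangle=0$ for all odd $l$. Combining the two reductions yields the characterization \eqref{eq:defX}; the Sobolev regularity plays no role beyond ensuring that $f$ lies in $\L^2(\Sp^2)$ so that the basis expansion is available.

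The argument is essentially bookkeeping, and I do not anticipate a real obstacle. The only mildly delicate point is the interpretation of the pointwise symmetry conditions in Definition \ref{def:Hsym} for possibly non-continuous $\L^2$-classes when $\beta$ is small; this is handled by recasting axisymmetry as invariance under the (unitary) $\L^2$-action of the rotation group and reflection symmetry as invariance under the involution $R$, both of which commute with the spectral decomposition of $-\Delta_{\Sp^2}$ and therefore act diagonally on the spherical harmonics.
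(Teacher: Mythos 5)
Your proof is correct and follows essentially the same route as the paper: both characterize axisymmetry via vanishing of the $m\neq 0$ coefficients in the spherical harmonic expansion, and both reduce the reflection symmetry to the parity identity $P_l(-x)=(-1)^l P_l(x)$ for Legendre polynomials, concluding that odd-$l$ coefficients must vanish. You fill in a few more details (the Fubini computation, the observation that the reflection operator acts diagonally on the basis, and the $\L^2$-class interpretation of the symmetry conditions), but the underlying argument is identical.
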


\begin{proof}
In view of the expansion of $f$ in spherical harmonics given in \eqref{eq:orthbasis} and the representation formula \eqref{eq:spherical}, it is clear that $f$ is a function of $\theta$ alone if and only if 
\[
\la f,Y^{m}_l\ra  =0 \quad\mbox{for all }m\not=0.
\]
It is thus enough to choose $m=0$, and  $\L^2_{\sym}(\Sp^2)$ is then spanned by the Legendre polynomials $P^0_l(\cos \theta)$. The symmetry with respect to $\pi/2$, which makes $f$ an even $\pi$-periodic function, is then equivalent to requiring that all generating  Legendre polynomials are even, $P_l^0(\cos \theta)  = P_l^0(-\cos \theta)$. This is the case precisely if $l$ is even, and thus
\[ 
\la f, Y^{0}_l \ra =0\quad\mbox{for any odd $l$}.
\]
This gives the desired characterization.   
\end{proof}

The following property of $\H^\beta(\Sp^2)$ and thus of $\H^\beta_{\sym}(\Sp^2)$ is crucial. 

\begin{lemma} \label{lem:embed}
	For any $ \beta >0 $ and all $k \in \N_0$, we have the   embeddings $\H^{\beta+k+1}(\Sp^2) \hookrightarrow \C^k(\Sp^2)$ and $ \H_{\sym}^{\beta+k+1}(\Sp^2) \hookrightarrow \C^k(\Sp^2) $. In particular,  $ \H_{\sym}^{\beta+1}(\Sp^2)$ is a multiplicative algebra and closed under composition with smooth functions.
\end{lemma}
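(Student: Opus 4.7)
The plan is to reduce everything to the classical Sobolev embedding in dimension $d=2$, which gives $\H^{s}(\R^{2})\hookrightarrow \C^{k}(\R^{2})$ whenever $s>k+1=k+d/2$. Since \cite{Grosse2013-rl} establishes that $\H^{\beta}(\Sp^{2})$ defined via the spherical harmonic expansion agrees with the space defined by smooth charts, a partition of unity argument transfers the Euclidean embedding to $\Sp^{2}$: for $s=\beta+k+1$ with $\beta>0$ we obtain $\H^{\beta+k+1}(\Sp^{2})\hookrightarrow \C^{k}(\Sp^{2})$. Alternatively, one can work intrinsically from the definition: using the addition theorem $\sum_{m=-l}^{l}|Y_{l}^{m}(x)|^{2}=(2l+1)/(4\pi)$ together with the Cauchy--Schwarz inequality,
\[
|f(x)|\le \sum_{l\ge 0}\Bigl(\sum_{m=-l}^{l}|\la f,Y_{l}^{m}\ra|^{2}\Bigr)^{1/2}\Bigl(\sum_{m=-l}^{l}|Y_{l}^{m}(x)|^{2}\Bigr)^{1/2}\lesssim \|f\|_{\H^{\beta+1}}\Bigl(\sum_{l\ge0}(1+l)^{-2\beta-1}\Bigr)^{1/2},
\]
which converges for $\beta>0$. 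Differentiating termwise and applying the analogous bound to angular derivatives of $Y_{l}^{m}$ (which grow only polynomially in $l$) yields the $\C^{k}$-embedding for $s=\beta+k+1$. The symmetric case follows because Lemma \ref{lem:charX} identifies $\H_{\sym}^{\beta+k+1}(\Sp^{2})$ as a closed subspace of $\H^{\beta+k+1}(\Sp^{2})$, so the embedding restricts.

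For the algebra property applied with $k=0$, the first part yields $\H^{\beta+1}_{\sym}(\Sp^{2})\hookrightarrow \C^{0}(\Sp^{2})$, so any $f,g\in \H^{\beta+1}_{\sym}(\Sp^{2})$ are bounded and continuous. A standard Kato--Ponce / Moser-type fractional Leibniz inequality on the compact manifold $\Sp^{2}$ (again obtained via charts, or directly from the spherical harmonic characterisation using that multiplication corresponds to convolution of Fourier coefficients) gives
\[
\|fg\|_{\H^{\beta+1}(\Sp^{2})}\lesssim \|f\|_{\L^{\infty}(\Sp^{2})}\|g\|_{\H^{\beta+1}(\Sp^{2})}+\|g\|_{\L^{\infty}(\Sp^{2})}\|f\|_{\H^{\beta+1}(\Sp^{2})}\lesssim \|f\|_{\H^{\beta+1}(\Sp^{2})}\|g\|_{\H^{\beta+1}(\Sp^{2})}.
\]
Symmetry is preserved pointwise, so $fg\in \H^{\beta+1}_{\sym}(\Sp^{2})$. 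For composition with a smooth $F\colon \R\to \R$, continuity of $f\in\H^{\beta+1}_{\sym}(\Sp^{2})$ makes $F\circ f$ well-defined, bounded, and symmetric; the classical Moser--Nemytskij estimate, valid for $s>d/2$, then yields $\|F\circ f\|_{\H^{\beta+1}(\Sp^{2})}\le C(\|f\|_{\L^{\infty}})(1+\|f\|_{\H^{\beta+1}(\Sp^{2})})$, and hence $F\circ f\in \H^{\beta+1}_{\sym}(\Sp^{2})$.

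None of the individual steps is deep: the statement is essentially a bookkeeping assembly of three classical ingredients (Sobolev embedding, fractional Leibniz, Moser--Nemytskij) on the compact manifold $\Sp^{2}$. The only mild obstacle is handling fractional $\beta$: for non-integer $\beta$ one cannot argue by plain differentiation, and one must either invoke the Bessel-potential characterisation through charts or work directly with the spherical harmonic coefficient norm, where the extra care is in controlling pointwise values and products of Fourier series. I would favour the intrinsic route, as it keeps the symmetric subspace transparent via Lemma \ref{lem:charX} and avoids checking that the chart-based multiplier estimates respect the symmetry constraint.
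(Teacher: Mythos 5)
Your primary chart-based argument is essentially the paper's: localize through smooth charts (invoking the equivalence of the spherical-harmonic and chart definitions from \cite{Grosse2013-rl}), then apply the Euclidean Sobolev embedding, fractional Leibniz, and Moser--Nemytskij estimates in $\R^2$; the symmetric case follows because $\H^\beta_{\sym}(\Sp^2)$ is a closed subspace by Lemma \ref{lem:charX}, so the embedding and the algebra structure simply restrict. One caution on the intrinsic alternative you say you favour: the $\C^0$ bound via Uns\"old's addition theorem is clean, but for $k\ge 1$ the termwise-differentiation step is less immediate than you suggest, because $\partial_\theta,\partial_\varphi$ are singular coordinates at the poles and one needs pointwise bounds on covariant (not coordinate) derivatives of $Y_l^m$ that are uniform over $\Sp^2$ and polynomial in $l$; the chart-based reduction sidesteps this entirely, which is presumably why the paper prefers it.
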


\begin{proof}
Since being an element of $\H^\beta(\Sp^2)$ or $\C^\beta(\Sp^2)$, respectively, is a local property, which can be checked on open sets that are diffeomorphic to an open set of  $\R^2$, the embeddings follow directly from the corresponding Sobolev embeddings on open sets in $\R^2$ and the Leibniz rule. Similarly, the fact that the spaces are closed under composition with smooth functions can be checked on open sets, where it is well-known, see e.g.\ \cite{brezis2001composition}.
For the strategy we refer to the proof of \cite[Theorem 2.20]{Aubin1998-xd} for the case of integer $\beta \in \N$ and \cite{Grosse2013-rl} for the tools to make it work in the fractional setting.
\end{proof}

\subsection{Differentiability and linearization}
\label{sec:curvature}

In this section, we explain the differentiability of the functional $\F$ and we compute its derivative at the sphere. Our goal is the following proposition.

\begin{proposition}
    \label{P1}
    Let $\alpha>0$ be given. There exists a constant $c_0>0$ such that the functional
    \[
    \F: \R\times \R\times \M_{c_0}^{\alpha+2} (0)\to \H^{\alpha}(\Sp^2) /_\const
    \]
    is continuously Fr\'echet differentiable. Its derivative at $(\wein,\wein,0)$ is given by
    \[
    \la \D_{\eta}\F(\wein,\wein,\eta)|_{\eta=0} ,\delta\eta\ra  = \frac92  \wein\sin \theta\,{e_{\varphi}\cdot}(2 \Id -\Lambda)(\sin \theta\, \delta \eta \, {e_{\varphi}})- \left(\Delta_{\Sp^2}+2\Id \right)\delta\eta,
    \]
    where $\Lambda$ denotes the Dirichlet-to-Neumann operator of the unit ball.
\end{proposition}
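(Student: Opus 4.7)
The plan is to split $\F = \J + \cC$ as in \eqref{eq:G2} and treat the two pieces in turn. For the curvature part, pulling $H_\eta$ back to $\Sp^2$ expresses $\cC(\eta) = H_\eta \circ \chi_\eta$ as a nonlinear second-order differential expression in $\eta$, built from smooth functions of $\eta, \nabla\eta, \nabla^2\eta$. The Sobolev embedding and the multiplication/composition properties from Lemma~\ref{lem:embed} give smoothness of $\cC: \M^{\alpha+2}_{c_0} \to \H^\alpha(\Sp^2)$ for $c_0$ small enough, and the classical computation of the Jacobi operator on the round sphere yields the second summand in the claim, $\D\cC|_0\,\delta\eta = -(\Delta_{\Sp^2}+2\Id)\,\delta\eta$.

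For the jump term $\J$, the core technical task is smooth $\eta$-dependence of $\psiin_\eta$ and $\psiout_\eta$. I would use a Hanzawa-type diffeomorphism $\Phi_\eta(x) = (1+\bar\eta(x))x$, with $\bar\eta$ a suitable cutoff radial extension of $\eta$, mapping $B$ onto $\Bin_\eta$ and $\R^3\setminus\overline{B}$ onto $\Bout_\eta$. Pulling back \eqref{eq:psiin} and \eqref{eq:psiout} yields linear elliptic problems on the fixed domains whose coefficients depend smoothly on $\eta$; for the exterior problem, a Kelvin transform $y = x/|x|^2$ first reduces it to an elliptic problem on $B$ with decay at infinity converted into a regular boundary condition at the origin. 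Since the unperturbed operators at $\eta=0$ are invertible, linear elliptic regularity together with the smooth coefficient dependence gives smoothness of $\eta\mapsto \psiin_\eta,\psiout_\eta$. Composing with the boundary trace on $\Sp^2$ and the first-order operator $\curl$ then yields smoothness of $\J$ as a map into $\H^\alpha(\Sp^2)/_\const$.

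For the linearization at $(\wein,\wein,0)$, I would use Hadamard's shape-derivative formula. Because the source in \eqref{eq:psiin} is a fixed function on $\R^3$ and the Dirichlet datum $g = \tfrac12 s\sin\theta\,e_\varphi$ in \eqref{eq:psiout} is a linear function of space, the shape derivatives $\psi^{\In,'}$ and $\psi^{\Out,'}$ are componentwise harmonic in $B$ and $\R^3\setminus\overline{B}$ respectively, and a direct reading of the explicit spherical solution \eqref{eq:psiHill} shows that they share the \emph{identical} Dirichlet data
\[
\psi^{\In,'}|_{\Sp^2} \;=\; -\partial_n\psiin_0\,\delta\eta \;=\; \tfrac{3}{2}\sin\theta\,\delta\eta\,e_\varphi \;=\; (\partial_n g - \partial_n\psiout_0)\,\delta\eta \;=\; \psi^{\Out,'}|_{\Sp^2}.
\]
Because the two extensions share their Dirichlet data, the tangential parts of their curls on $\Sp^2$ agree, and only the normal derivatives of $\psi^{\bullet,'}$ survive in the difference $\curl\psi^{\In,'} - \curl\psi^{\Out,'}|_{\Sp^2}$; these are encoded by the interior, resp.\ exterior, Dirichlet-to-Neumann operators for the azimuthal component equation. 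Pairing against $v^{\In}_0|_{\Sp^2} = v^{\Out}_0-e_3|_{\Sp^2} = \tfrac{3}{2}\sin\theta\,e_\theta$ (which holds precisely because $\wein = \weout$, cf.~\eqref{exp jump}--\eqref{19}) and adding the material-derivative contribution $\tfrac12\bigl(\partial_n|v^{\In}_0|^2 - \partial_n|v^{\Out}_0-e_3|^2\bigr)\delta\eta$, a short algebraic computation collapses the combined interior/exterior DtN contributions into the single vector interior DtN $\Lambda$ of the componentwise Laplacian, giving the advertised factor $\tfrac{9}{2}\wein\sin\theta\,e_\varphi\cdot(2\Id-\Lambda)(\sin\theta\,\delta\eta\,e_\varphi)$.

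The main obstacle is the second step, i.e., setting up the function-analytic framework so that the Hanzawa pullback, the Kelvin transform, the elliptic solvability, the boundary trace, and the operator $\curl$ all compose smoothly into a map with values in $\H^\alpha(\Sp^2)$ rather than the formally weaker $\H^{\alpha-1}$. Once this is in place, the computation in step three reduces to the explicit spherical-coordinate identities recalled above.
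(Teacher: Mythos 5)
Your proposal is correct and follows essentially the same route as the paper: the decomposition $\F=\J+\cC$, the Hanzawa pullback together with the Kelvin transform to reduce the exterior problem, the implicit function theorem for smooth $\eta$-dependence of $\psiin_\eta,\psiout_\eta$, and the computation of the linearization through the Jacobi operator and the Dirichlet-to-Neumann operators (with your observation that $\psi^{\In,'}$ and $\psi^{\Out,'}$ share the boundary data $\tfrac32\sin\theta\,\delta\eta\,e_\varphi$ serving as a neat streamlining of the paper's step of adding the inner and outer contributions via $\Lambda^{\Out}=-\Lambda-\Id$). One small correction: the identity $\curl\psiin_0|_{\Sp^2}=(\curl\psiout_0-e_3)|_{\Sp^2}=\tfrac32\sin\theta\,e_\theta$ is a property of the normalized reference functions \eqref{eq:psiin}--\eqref{eq:psiout} and holds independently of the Weber numbers; the relation $\wein=\weout$ enters instead to ensure that $\eta=0$ is an actual zero of $\F$ (cf.\ \eqref{19}) and to collect the common prefactor $\wein$ in front of the combined Dirichlet-to-Neumann expression.
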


To start with, we first notice that by Lemma \ref{lem:embed}, there exists for any $\alpha > 0$ a radius $\delta_{\alpha}$ such that
\[
\norm{\eta}_\infty+\norm{\nabla_{\Sp^2} \eta}_\infty < \frac{1}{2}
\]
for any $\eta\in   B_{\delta_{\alpha}}^{ \H_{\sym}^{\alpha+2} (\Sp^2)}  (0) \subset  \H_{\sym}^{\alpha+2}(\Sp^2)$.  

We introduce our solution manifold and its tangent at the origin.

\begin{lemma}\label{lem:M}
Let $\alpha >0$ be given and $c_0\le \delta_{\alpha}$.	Then the Banach manifold 
	 \begin{equation*}
 	\M^{\alpha+2} := \M^{\alpha+2}_{c_0} = \left\{ \eta \in  B_{c_0}^{ \H_{\sym}^{\alpha+2}(\Sp^2)} (0): \abs{\Bin_\eta} = \frac{4}{3} \pi  \right\}
 \end{equation*}
	is smooth and its tangent space at $\eta = 0$ is given by 
	\begin{equation*}
		T_0\M^{\alpha+2} = \left\{ \eta \in \H_{\sym}^{\alpha+2}(\Sp^2) : \int_{\Sp^2} \eta \dx \sigma = 0\right\}. 
	\end{equation*}
\end{lemma}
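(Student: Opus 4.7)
The plan is to realize $\M^{\alpha+2}$ as a smooth level set of a single real-valued functional, so that it becomes a codimension-one Banach submanifold of the ambient ball via the submersion theorem.

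First, I would write the volume constraint explicitly. Passing to spherical coordinates and using that $\chi_\eta(x) = (1+\eta(x))x$ parametrises $\S_\eta$ as a graph over $\Sp^2$, one obtains
\begin{equation*}
\abs{\Bin_\eta} = \int_{\Sp^2} \int_0^{1+\eta(x)} s^2 \dx s \dx \sigma(x) = \frac{1}{3}\int_{\Sp^2}\bigl(1+\eta(x)\bigr)^3\dx\sigma(x),
\end{equation*}
which requires only that $1+\eta>0$ pointwise; for $c_0\le \delta_\alpha$ this is guaranteed by Lemma \ref{lem:embed} since $\|\eta\|_\infty<1/2$. Define $V\colon B_{c_0}^{\H^{\alpha+2}_{\sym}(\Sp^2)}(0)\to\R$ by $V(\eta)=\frac{1}{3}\int_{\Sp^2}(1+\eta)^3\dx\sigma$. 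Then $\M^{\alpha+2} = V^{-1}(4\pi/3)$ and $V(0)=4\pi/3$.

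Next I would verify smoothness and the submersion property. By Lemma \ref{lem:embed}, $\H^{\alpha+2}_{\sym}(\Sp^2)$ is a multiplicative algebra, so the cubic map $\eta\mapsto(1+\eta)^3$ is polynomial hence smooth from $\H^{\alpha+2}_{\sym}$ to itself, and integration against $\dx\sigma$ is a bounded linear functional; therefore $V$ is smooth. A direct computation gives the Fr\'echet derivative
\begin{equation*}
\langle DV(\eta),\delta\eta\rangle = \int_{\Sp^2}(1+\eta)^2\,\delta\eta\dx\sigma,
\end{equation*}
so in particular $\langle DV(0),\delta\eta\rangle = \int_{\Sp^2}\delta\eta\dx\sigma$. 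Since the constant function $\delta\eta\equiv 1$ lies in $\H^{\alpha+2}_{\sym}(\Sp^2)$ and is mapped to $4\pi\ne 0$, $DV(0)$ is a surjection onto $\R$ with closed (and in fact complemented) kernel.

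Finally, I would apply the implicit function theorem in Banach spaces (the submersion theorem): since $V$ is smooth near $0$ and $DV(0)$ is a surjective bounded linear map with complemented kernel, $V^{-1}(4\pi/3)$ is, after possibly shrinking $c_0$, a smooth Banach submanifold of $B_{c_0}^{\H^{\alpha+2}_{\sym}}(0)$ of codimension one, with tangent space at $\eta=0$ equal to $\ker DV(0)$, namely
\begin{equation*}
T_0\M^{\alpha+2} = \Bigl\{\delta\eta\in \H^{\alpha+2}_{\sym}(\Sp^2) : \int_{\Sp^2}\delta\eta\dx\sigma = 0\Bigr\},
\end{equation*}
which is the claimed identification. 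There is no real obstacle here: the only thing to be mildly careful about is keeping $c_0\le\delta_\alpha$ so that $1+\eta>0$ and the explicit formula for $V$ makes sense, and checking that the algebra property of $\H^{\alpha+2}_{\sym}$ (as opposed to merely $\H^{\alpha+2}$) is used so that the submanifold lives inside the symmetric subspace.
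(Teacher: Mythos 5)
Your proof is correct and is essentially the argument that the paper delegates to \cite{Meyer2024-xx}: express the volume constraint as a level set of the smooth functional $V(\eta)=\frac{1}{3}\int_{\Sp^2}(1+\eta)^3\dx\sigma$, verify via the algebra property that $V$ is smooth and that $DV(0)$ is a surjection onto $\R$ with complemented kernel, and invoke the submersion theorem. One small remark: the caveat ``after possibly shrinking $c_0$'' is unnecessary here, since $DV(\eta)[1]=\int_{\Sp^2}(1+\eta)^2\dx\sigma>0$ for all $\eta$ in the ball $B_{c_0}$ (because $\|\eta\|_\infty<1/2$), so $V$ is already a submersion on the whole ambient ball.
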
 

\begin{proof}
The assertion is fairly well-known. See, for instance, Proposition 3.3 in 	 \cite{Meyer2024-xx} for details.
\end{proof}

A simple calculation in differential geometry, which we omit here, shows that the pull-back of the mean curvature $H_\eta\circ\chi_\eta$ for $\eta$ suitably smooth can be written as 
\begin{equation}
  \cC(\eta)=  H_\eta \circ \chi_\eta=\frac{1}{1+\eta} \left(2\frac{1+\eta}{\sqrt{g_\eta}} - \frac{\Delta_{\Sp^2} \eta}{\sqrt{g_\eta}} -  \nabla_{\Sp^2} \frac{1}{\sqrt{g_\eta}} \cdot \nabla_{\Sp^2} \eta  \right),\label{form H}
\end{equation}
where $g_\eta = (1+\eta)^2+ \abs{\nabla_{\Sp^2} \eta}^2$. See, for instance, \cite[Section 2.2]{Pruss2016-ht} or \cite[Section 7]{eiter2023falling} (where a different sign convention is employed) for a derivation.

We provide the differentiability of the curvature term.

\begin{lemma}\label{lem:Dcurv}
	Let $\alpha > 0$. The mapping
	\begin{equation*}
		\cC \colon \M^{\alpha+2}  \to  \H_{\sym}^{\alpha}(\Sp^2)/_\const
	\end{equation*}
	 is continuously Fr\'echet-differentiable and 
	\begin{equation} \label{eq:Detacurv}
		\left. \mathrm{D}_\eta \, \cC(\eta) \right|_{\eta = 0} = - \left(\Delta_{\Sp^2}+2\Id \right)\colon T_0\M^{\alpha+2} \to  \H_{\sym}^{\alpha}(\Sp^2)/_\const.
	\end{equation}
\end{lemma}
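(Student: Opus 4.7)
The strategy is to read off differentiability and the linearization directly from the explicit formula \eqref{form H} for $\cC(\eta)$, exploiting the Sobolev algebra and composition properties recorded in Lemma \ref{lem:embed}. First I would check that $\cC$ is well-defined as a map into $\H_{\sym}^\alpha(\Sp^2)/_\const$. For $\eta \in \M^{\alpha+2}_{c_0}$ with $c_0$ small enough that $1+\eta \ge 1/2$ pointwise, both $\eta$ and $\nabla_{\Sp^2}\eta$ lie in $\H^{\alpha+1}(\Sp^2)$. Since $\H^{\alpha+1}(\Sp^2)$ is a multiplicative algebra closed under composition with smooth real-valued functions (Lemma \ref{lem:embed}), the quantities $g_\eta$, $1/\sqrt{g_\eta}$ and $1/(1+\eta)$ all lie in $\H^{\alpha+1}(\Sp^2)$, and $\nabla_{\Sp^2}(1/\sqrt{g_\eta})$ lies in $\H^\alpha(\Sp^2)$. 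Using the multiplier bound $\H^{\alpha+1}(\Sp^2) \cdot \H^\alpha(\Sp^2) \subset \H^\alpha(\Sp^2)$ (valid for $\alpha > 0$ by the $L^\infty$-embedding) together with $\Delta_{\Sp^2}\eta \in \H^\alpha(\Sp^2)$ then gives $\cC(\eta) \in \H^\alpha(\Sp^2)$. The required symmetry is preserved because $\chi_\eta$ is a radial dilation and $H_\eta$ is invariant under the isometries that preserve $\S_\eta$.

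For Fr\'echet differentiability, I would view $\cC$ as a composition of three elementary building blocks, each smooth on the appropriate Sobolev spaces: (i) the bounded linear maps $\eta \mapsto \Delta_{\Sp^2}\eta$ and $\eta \mapsto \nabla_{\Sp^2}\eta$ from $\H^{\alpha+2}$ into $\H^\alpha$ and $\H^{\alpha+1}$, respectively; (ii) pointwise multiplication, which is a bounded bilinear form by the multiplier estimates above; and (iii) substitution into real-analytic scalar functions such as $t \mapsto 1/(1+t)$ and $(t,v) \mapsto ((1+t)^2 + |v|^2)^{-1/2}$ defined near the origin, which act smoothly on $\H^{\alpha+1}$ by the composition part of Lemma \ref{lem:embed}. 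Chain and product rules then yield continuous Fr\'echet differentiability of $\cC$ on $\M^{\alpha+2}_{c_0}$.

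Finally, to identify $\mathrm{D}_\eta \cC(0)$ I would compute the first-order Taylor expansion of $\cC(t\delta\eta)$ in $t$ about $t = 0$ for a fixed $\delta\eta \in T_0 \M^{\alpha+2}$. From $g_{t\delta\eta} = 1 + 2t\delta\eta + O(t^2)$ one obtains $1/\sqrt{g_{t\delta\eta}} = 1 - t\delta\eta + O(t^2)$ and $\nabla_{\Sp^2}(1/\sqrt{g_{t\delta\eta}}) = -t\nabla_{\Sp^2}\delta\eta + O(t^2)$. Expanding each summand inside the parentheses of \eqref{form H}: the term $2(1+\eta)/\sqrt{g_\eta}$ contributes $2 + O(t^2)$ since the two linear pieces cancel, the term $-\Delta_{\Sp^2}\eta/\sqrt{g_\eta}$ contributes $-t\,\Delta_{\Sp^2}\delta\eta + O(t^2)$, and the gradient term is $O(t^2)$. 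Multiplying by $1/(1+\eta) = 1 - t\delta\eta + O(t^2)$ produces
\[
\cC(t\delta\eta) = 2 - t\bigl(\Delta_{\Sp^2}\delta\eta + 2\delta\eta\bigr) + O(t^2),
\]
which yields \eqref{eq:Detacurv} after quotienting out the constant $\cC(0)=2$.

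The main technical point I expect is the clean cancellation of the linear-in-$\delta\eta$ contribution coming from $2(1+\eta)/\sqrt{g_\eta}$, so that only the terms producing $-\Delta_{\Sp^2}\delta\eta - 2\delta\eta$ survive; the remaining verifications are routine Sobolev calculus on $\Sp^2$ once Lemma \ref{lem:embed} is in hand.
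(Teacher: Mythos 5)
Your proof is correct and takes essentially the same route as the paper: well-definedness and continuous Fr\'echet differentiability come from the explicit formula \eqref{form H} together with the algebra and composition properties of Lemma \ref{lem:embed}, and the derivative at $\eta=0$ follows by the Taylor expansion you carry out (the paper simply cites references for the differentiability step and calls the linearization an immediate consequence of \eqref{form H}). The key cancellation in the $2(1+\eta)/\sqrt{g_\eta}$ term and the resulting $-\Delta_{\Sp^2}\delta\eta - 2\delta\eta$ are computed correctly.
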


\begin{proof}
It is elementary to check that $H_\eta\circ\chi_\eta$ inherits the symmetry properties \eqref{43} from $\eta$.
 As $\alpha > 0$, an application of Lemma \ref{lem:embed} yields that $\H_{\sym}^{\alpha+2}(\Sp^2)$ is an algebra embedding into $ \C^1(\Sp^2)$, and thus $H_\eta \circ \chi_\eta\in \H_{\sym}^{\alpha}(\Sp^2)$ is well-defined. Furthermore, the dependency of the mapping \eqref{form H} on second-order derivatives of $\eta$   is linear. 

The differentiability near $\eta=0$ can then be straightforwardly checked. See, for instance,  \cite[Lemma 2.1]{Meyer2024-xx} or \cite[Lemma 2.8]{Abels2018-ht}. The form of the derivative is an immediate consequence of the explicit formula \eqref{form H}.
\end{proof}

Next, we study the regularity of the jump term and compute its derivative. 
For further reference, start by recalling that  the curl of a vector field $A = A_se_s+A_{\theta}e_{\theta}+A_{\varphi}e_{\varphi}$ on $\R^3$  reads in spherical coordinates
    \begin{equation} \label{eq:curlspher}
    \begin{aligned}
     \curl A &= \frac{1}{s \sin \theta}\left(\frac{\partial}{\partial \theta}\left(A_{\varphi} \sin \theta\right)-\frac{\partial A_\theta}{\partial \varphi}\right) e_s\\
     &\quad +\frac{1}{s}\left(\frac{1}{\sin \theta} \frac{\partial A_s}{\partial \varphi}-\frac{\partial}{\partial s}\left(s A_{\varphi}\right)\right) e_\theta \\
        &\quad+\frac{1}{s}\left(\frac{\partial}{\partial s}\left(s A_\theta\right)-\frac{\partial A_s}{\partial \theta}\right) e_\varphi.
        \end{aligned}
    \end{equation}

\noindent The main result of this subsection is the following.

\begin{lemma} \label{lem:jump}
	Let $\alpha > 0$. For $c_0$ small enough, the jump term  $\J \colon \R\times \R\times \M^{\alpha+2}_{c_0 } \to  \H_{\sym}^{\alpha+1}(\Sp^2)$ is continuously Fr\'echet differentiable with derivative $$\mathrm{D}_\eta \J(\wein,\wein,\eta)|_{\eta = 0} \colon T_0\M_{c_0}^{\alpha+2} \to  \H_{\sym}^{\alpha+1}(\Sp^2)$$ given by
    \begin{equation} \label{eq:DJ}
        \langle \mathrm{D}_\eta \J(\wein,\wein,\eta)|_{\eta=0}, \delta \eta \rangle =  \frac92  \wein \sin \theta\,{e_{\varphi}\cdot}(2 \Id -\Lambda)(\sin \theta\, \delta \eta \, {e_{\varphi}}),
    \end{equation}
    where $\Lambda$ is the Dirichlet-to-Neumann map for the Laplacian on the unit ball in $\R^3$. In particular, $\J$ is also continuously  differentiable as a map from $\mathcal{M}_{c_0}^{\alpha+2}$ to $ \H_{\sym}^{\alpha}(\Sp^2)/_\const$.
\end{lemma}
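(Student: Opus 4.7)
The strategy is to transfer both problems to a fixed reference domain so that only the coefficients depend on $\eta$, establish smooth dependence via elliptic regularity, and then compute the linearization at $\eta=0$ explicitly via the chain rule.

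\emph{Step 1 (regularity).} I first introduce a family of $\H^{\alpha+2}$-smooth diffeomorphisms $\Phi_\eta\colon B\to\Bin_\eta$ extending $\chi_\eta$ into the interior, for instance $\Phi_\eta(y)=(1+\rho(|y|)\eta(y/|y|))y$ with a smooth radial cut-off $\rho$. Pulling \eqref{eq:psiin} back to $B$ gives a uniformly elliptic BVP on a fixed domain with coefficients depending smoothly on $\eta$, so elliptic regularity combined with the implicit function theorem yields smoothness of the pulled-back inner stream function as a map $\M^{\alpha+2}_{c_0}\to\H^{\alpha+5/2}(B;\R^3)$. For the exterior I apply the Kelvin transform, as suggested in the excerpt, to recast \eqref{eq:psiout} as an interior problem on $B$ and argue similarly. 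Composing with $\curl$ (loses one derivative), the trace onto $\Sp^2$ (loses $1/2$) and the pullback under $\chi_\eta$ produces continuously Fr\'echet-differentiable maps $\eta\mapsto u^{\In}_\eta,u^{\Out}_\eta\in\H^{\alpha+1}_{\sym}(\Sp^2;\R^3)$. Squaring and combining with scalar factors is smooth since $\H^{\alpha+1}_{\sym}(\Sp^2)$ is a Banach algebra by Lemma~\ref{lem:embed}; all operations respect axisymmetry and reflection symmetry, and the final embedding $\H^{\alpha+1}_{\sym}\hookrightarrow\H^{\alpha}_{\sym}/_{\const}$ is trivial.

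\emph{Step 2 (shape derivatives at $\eta=0$).} Differentiating the boundary conditions $\psi^{\In}_\eta\circ\chi_\eta=0$ and $\psi^{\Out}_\eta\circ\chi_\eta=\tfrac{1}{2}s\sin\theta\,e_\varphi|_{\S_\eta}$ at $\eta=0$, and noting that the sources in \eqref{eq:psiin}--\eqref{eq:psiout} are $\eta$-independent, the shape derivatives $\dot\psi^{\In},\dot\psi^{\Out}$ are vector-harmonic in $B$ resp.\ $\Bout$. Reading off $\partial_s\psi^{\In}_0|_{s=1}=-\tfrac{3}{2}\sin\theta\,e_\varphi$ and $\partial_s\psi^{\Out}_0|_{s=1}=-\sin\theta\,e_\varphi$ from \eqref{eq:psiHill}, one finds the key identity that both boundary traces agree:
\begin{equation*}
\dot\psi^{\In}\big|_{\Sp^2}\;=\;\dot\psi^{\Out}\big|_{\Sp^2}\;=\;\tfrac{3}{2}\,\delta\eta\,\sin\theta\,e_\varphi.
\end{equation*}

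\emph{Step 3 (chain rule and identification of $\Lambda$).} From \eqref{eq:psiHill} a direct computation yields $(\curl\psi^{\In}_0)|_{\Sp^2}=(\curl\psi^{\Out}_0-e_3)|_{\Sp^2}=\tfrac{3}{2}\sin\theta\,e_\theta$, so the chain rule collapses $\langle\D_\eta\J|_0,\delta\eta\rangle$ to
\begin{equation*}
\gamma\,\tfrac{3}{2}\sin\theta\,e_\theta\cdot\Bigl[\curl(\dot\psi^{\In}-\dot\psi^{\Out})\big|_{\Sp^2}+\delta\eta\,\partial_s(\curl\psi^{\In}_0-\curl\psi^{\Out}_0)\big|_{s=1}\Bigr].
\end{equation*}
The second bracket evaluates to $\tfrac{15}{2}\sin\theta\,e_\theta$ by direct computation. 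For the first, applying \eqref{eq:curlspher} to the purely azimuthal fields $\dot\psi^{\In/\Out}=B^{\In/\Out}(s,\theta)\,e_\varphi$ shows that the $e_s$-components depend only on boundary data and therefore cancel in the difference, while the $e_\theta$-component equals $-(\partial_sB^{\In}-\partial_sB^{\Out})|_{s=1}$. Expanding the common boundary datum $\tfrac{3}{2}\delta\eta\sin\theta\,e_\varphi$ in Cartesian-componentwise spherical harmonics and using that the scalar-harmonic extension of $Y_l^m$ into $B$ resp.\ $\R^3\setminus\overline B$ has normal derivative $lY_l^m$ resp.\ $-(l+1)Y_l^m$ at $\Sp^2$, this jump equals $(2\Lambda+\Id)(\tfrac{3}{2}\delta\eta\sin\theta\,e_\varphi)$ in the $e_\varphi$-direction. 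Collecting terms, the operator identity $-\tfrac{3}{2}(2\Lambda+\Id)+\tfrac{15}{2}\Id=3(2\Id-\Lambda)$ together with the outer prefactor $\gamma\cdot\tfrac{3}{2}\sin\theta$ yields exactly \eqref{eq:DJ}.

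\emph{Main obstacle.} The only nontrivial step is Step~1: setting up the Kelvin-transformed exterior problem cleanly and tracking smooth $\eta$-dependence in the correct Sobolev scale, together with checking that the composition with $\curl$, trace, and $\chi_\eta$-pullback loses exactly the expected regularity. Steps~2 and~3 are direct calculations once this functional-analytic framework is in place.
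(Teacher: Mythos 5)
Your overall strategy — pulling the elliptic problems back to the fixed domain $B$ (via Kelvin transform for the exterior), establishing smooth $\eta$-dependence through the implicit function theorem, and then computing the shape derivative via the chain rule — is exactly the paper's approach, and your Steps 2 and 3 are correct and in fact slightly tidier than the paper's: noticing that $\dot\psi^{\In}$ and $\dot\psi^{\Out}$ share the same boundary trace $\tfrac32\delta\eta\sin\theta\,e_\varphi$ lets you write the whole linearization as a single jump of Dirichlet-to-Neumann data, and your arithmetic $-\tfrac32(2\Lambda+\Id)+\tfrac{15}{2}\Id=3(2\Id-\Lambda)$ reproduces \eqref{eq:DJ} correctly.

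There is, however, a genuine gap in Step 1. The extension you propose, $\Phi_\eta(y)=(1+\rho(|y|)\,\eta(y/|y|))y$, only gives $\rho(|y|)\eta(y/|y|)\in\H^{\alpha+2}(B)$; the radial-constant extension does not gain half a derivative. Consequently the pulled-back coefficients $M_\eta$, $f_\eta$ lie only in $\H^{\alpha+1}(B)$, elliptic regularity then yields $\varphi^{\In}_\eta\in\H^{\alpha+2}(B)$ (not $\H^{\alpha+5/2}(B)$ as you claim), and after applying $\curl$ and taking the trace you end up with $\J\in\H^{\alpha+1/2}_{\sym}(\Sp^2)$, half a derivative short of the claimed target $\H^{\alpha+1}_{\sym}(\Sp^2)$. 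The paper avoids this by using the \emph{harmonic} extension of $\eta$ (multiplied by a radial cutoff), which lands in $\H^{\alpha+5/2}(B)$ by the inverse trace theorem and elliptic regularity; this half-derivative gain propagates through the coefficients, the elliptic estimate, the $\curl$, and the trace exactly to produce $\H^{\alpha+1}(\Sp^2)$. Your version would still suffice for the ``in particular'' conclusion (differentiability into $\H^\alpha_{\sym}(\Sp^2)/_{\const}$) since $\alpha+1/2>\alpha$, but it does not prove the full regularity statement of the lemma. Replacing the radial-constant extension by the harmonic one closes the gap.
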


In what follows, we tacitly assume that $c_0$ is chosen small enough so that the statement of Lemma \ref{lem:jump} applies.

\begin{proof}
For our analysis, it will be beneficial to extend the shape function $\eta$ to all of $\R^3$. For this, we consider its harmonic extension $\bar\eta$, solving $\laplace \bar \eta = 0$ in $\R^3\setminus \Sp^2$ and $\bar \eta=\eta$ on $\Sp^2$. We localize again with the help of a cut-off function $\zeta$, that we choose smooth and radially symmetric, supported in $B_2(0)$ and equal to $1$ in a neighborhood of $B_1(0)$. By an abuse of notation, we set $\eta = \zeta\bar\eta$, and we obtain  $\eta\in \H^{\alpha+\frac52}(\R^3\setminus\Sp^2)$  and 
     \begin{equation}
         \label{20}
     \|\eta\|_{\H^{\alpha+{\frac52}}(\R^3\setminus \Sp^2)} \le C \|\eta\|_{\H^{\alpha+2}(\Sp^2)},
      \end{equation}
     for some constant $C = C(\alpha)>0$, by construction and elliptic regularity estimates, see \cite[Chapter 2, Thm.\ 5.4]{lions2012non}.

  Having now a globally extended shape function, we may also extend our parametrization to all of $\R^3$ by setting
\begin{equation*}
	\chi_\eta(x) = (1+\eta(x))x,
\end{equation*}
for any $x\in\R^3$. The resulting map $\chi_{\eta}:\R^3\to \R^3$
 is a diffeomorphism  because  it is a small $\C^1$-perturbation of the identity  after possibly reducing the radius $c_0$ introduced in Lemma \ref{lem:M}. More precisely, we calculate 
\begin{equation*}
	\D \chi_\eta = (1+\eta ) \Id + x \otimes \nabla \eta ,
\end{equation*}
and thus, by the matrix determinant lemma, we have the formula
\begin{equation*}
	\det(\D \chi_\eta) = (1+\eta)^2(1+\eta+x \cdot \nabla \eta).
\end{equation*}
Apparently, the determinant is positive if $\|\eta\|_{\C^1}$ is sufficiently small. This is guaranteed by the standard (fractional) Sobolev embedding in $\R^3$ and the bound in \eqref{20}, if the constant $c_0$ in Lemma \ref{lem:M} is chosen sufficiently small.

We will now study the differentiability of the inner and outer problems separately.

\medskip

\noindent
\emph{The inner problem.} Pulling back the inner problem \eqref{eq:psiin} to the unit ball $B=B_1(0)=\chi_{\eta}^{-1}(\Bin_{\eta})$ and  setting $\phin_\eta =\psiin_\eta\circ \chi_\eta  $, we find the elliptic equation
 \begin{equation} \label{eq:phiin}
	\begin{cases}
			-\nabla \cdot \left( M_\eta \nabla \phin_\eta \right) = f_\eta & \mbox{ in } B, \\
			\phin_\eta = 0& \mbox{ on } \partial B,
	\end{cases}
\end{equation}
 where 
 \begin{align}
	M_\eta =  (\det \D \chi_\eta)  \D \chi_\eta^{-1} \D\chi_\eta^{-T}, \quad 
 f_\eta  = {\frac{15}2} ( \det D \chi_\eta ) (s\sin \theta \, e_\varphi) \circ \chi_\eta.\label{def Meta}
\end{align}
We first establish that near $\eta=0$ the mapping $\eta\mapsto \phin_{\eta}$ is  continuously Fr\'echet differentiable from $\H_{\sym}^{\alpha+2}(\Sp^2)$ to $\H^{\alpha+\frac12 }_{\sym}(B)$. Our argument is very similar to that in the proof of Lemma 4.3 in \cite{Meyer2024-xx}, where more details can be found. We consider  \begin{equation*}N(\eta,\phi) = \div (M_{\eta}\grad\phi)+f_{\eta},
\end{equation*} 
which maps $\M^{\alpha+2} \times \H^{\alpha+\frac52}_0(B) $ to $\H^{\alpha+\frac12}(B)$. Of course, $N(0,\phin_0) = 0$. The coefficients $M_{\eta}$ and $f_{\eta}$ are both continuously Fr\'echet differentiable as mappings from $\M^{\alpha+2}$ to $\H^{\alpha+\frac32}_{\sym}(B)$, and $N$ is continuously Fr\'echet differentiable in $\phi\in \H^{\alpha+\frac52}_0(B)$ with derivative $\D_{\phi}N(0,\phi)|_{\phi=\phin_0} = \laplace$. 

Because of the boundary conditions imposed on $\phi$, this is the Dirichlet Laplacian on the unit ball, which is invertible from $\H^{\alpha+\frac12}(B)$ to $\H^{\alpha+\frac52}_0(B)$ (cf. \cite[Chapter 2, Thm.\ 5.4]{lions2012non}). The implicit function theorem thus guarantees that the unique solution $\phin_{\eta}$ to \eqref{eq:phiin}, considered as a function $\eta\mapsto \phin_{\eta}$ from $\M^{\alpha+2}$ to $ \H^{\alpha+\frac52}_0(B)$, depends continuously Fr\'echet differentiable on $\eta$ after possibly decreasing $c_0$. 

By the previous argument, the trace estimate \cite[Chapter 1, Thm.\ 9.4]{lions2012non}, and the chain rule,  $\eta\mapsto (\curl\psiin_{\eta})\circ\chi_{\eta} = (\curl (\phin_{\eta}\circ\chi_{\eta}^{-1}))\circ \chi_{\eta} $ is continuously Fr\'echet differentiable as a mapping from $\H^{\alpha+2}(\Sp^2)$ to $\H^{\alpha+1}(\Sp^2)$ near $\eta=0$. In particular, by using the algebra property, cf.~Lemma \ref{lem:embed}, the mapping
\begin{equation*}
	\J^{\mathrm{in}}_\eta \colon \M^{\alpha+2}  \to  \H_{\sym}^{\alpha+1}(\Sp^2) /_\const , \quad \J^{\mathrm{in}}_\eta = \frac12\abs{(\curl \psiin_{\eta})\circ\chi_{\eta} }^2
\end{equation*}
is well-defined and continuously Fr\'echet differentiable.

Let us now compute the derivative. As a preparation, we study the limiting problem. By comparison with the vector stream function for the spherical vortex \eqref{eq:psiHill}, we must have  
\begin{equation}
\label{eq:psiin0}
\psiin_0   = {\frac34}(1-s^2)s\sin\theta \, e_{\varphi},
\end{equation}
and thus, its normal trace is given by
\begin{equation}
\label{eq:dspsiin0}
\left.\partial_s \psiin_0\right|_{s=1} = -{\frac32}\sin\theta\, e_{\varphi}.
\end{equation}
We furthermore compute the  curl (see \eqref{eq:curlspher} for its representation in spherical coordinates),
\begin{equation}
    \label{eq:curlpsiin0}
\curl \psiin_0 =\left({\frac32}(1-s^2)\cos \theta\right)e_s +\left({\frac32}(2s^2-1)\sin\theta\right)e_{\theta}.
\end{equation}
This expression simplifies at the boundary, and we find that
\begin{equation}
    \label{eq:curlpsiin0s1}
\left.\curl \psiin_0\right|_{s=1} = {\frac32}\sin\theta \, e_{\theta}.
\end{equation}
%
Differentiating $\mathcal{J}^{\text{in}}_{\eta}$ and using the identity \eqref{eq:curlpsiin0s1} then yields via the chain rule
\begin{align}\label{eq:DJineta}
    \la \left.\D_{\eta}\right|_{\eta=0} \mathcal{J}^{\text{in}}_{\eta},\delta \eta\ra = {\frac32}\sin\theta\la \left.\D_{\eta}\right|_{\eta=0} \left((\curl_{\theta}\psiin_{\eta})\circ\chi_{\eta}\right),\delta \eta\ra .
    \end{align}
    Here, the indexed $\theta$ in $\curl_\theta$ indicates that we are concerned with the $e_\theta$ component of the curl. In the following, we simplify the notation by omitting the evaluation $\eta = 0$ after the derivative symbol, while keeping in mind that all terms are still evaluated at this value.
    
In order to identify the right-hand side, we start by observing that for every $f$ we have
    \begin{equation}
\label{eq:Dchi}
    \D_{\eta}(f\circ\chi_{\eta}) = x\cdot \grad_x f  = s\partial_s f.
\end{equation}
Applying now the chain rule and this formula yields
\begin{equation}\label{eq:Dcurlpsiin}
\la  \D_{\eta}  \left((\curl_{\theta}\psiin_{\eta})\circ \chi_{\eta}\right),\delta \eta\ra  = \curl_{\theta}\la \D_{\eta}\psiin_{\eta},\delta \eta\ra +s\partial_s (\curl_{\theta} \psiin_0)\delta \eta.
\end{equation}
The second term can be explicitly computed. Using the formula for the curl in \eqref{eq:curlpsiin0}, we calculate
\begin{align*}
    s\partial_s (\curl_{\theta}\psiin_0)  = s\partial_s \left({\frac32}(2s^2-1)\sin\theta\right)  ={6} s^2 \sin\theta.
\end{align*}
    Evaluation at the boundary thus gives
    \begin{equation}\label{eq:sDscurlpsiin0}
\left.s\partial_s (\curl_{\theta} \psiin_0)\delta \eta\right|_{s=1} =  {6}\sin\theta\delta \eta .
    \end{equation}
For the first term in \eqref{eq:Dcurlpsiin}, we differentiate the elliptic problem \eqref{eq:psiin} for $\psiin_{\eta}$ and find thanks to \eqref{eq:Dchi} and \eqref{eq:dspsiin0},
\[
\laplace \la \D_{\eta}\psiin_{\eta},\delta \eta\ra = 0\quad \mbox{in }B,\quad \la \D_{\eta}\psiin_{\eta},\delta \eta\ra  = {\frac32} \sin\theta \, \delta \eta e_{\varphi } \quad\mbox{on }\partial B.
\]
Using this information, we find via \eqref{eq:curlspher}
\begin{equation}\label{eq:curlDpsiin_2}
\left.\curl_{\theta} \la \D_{\eta}\psiin_{\eta},\delta \eta\ra \right|_{s=1} = -\left.\frac{\partial}{\partial s}\left(s\la \D_{\eta}\psiin_{\eta},\delta \eta\ra\cdot e_{\varphi}\right)\right|_{s=1} = -{\frac32}{e_{\varphi}}\cdot(\Lambda+\Id)\left(\sin\theta \, \delta \eta \, {e_{\varphi}}\right),
\end{equation}
where $\Lambda$ denotes the Dirichlet-to-Neumann operator associated with the unit ball defined as $\Lambda g=\partial_s f$, where $f$ solves 
\[
\laplace f = 0\quad \mbox{in }B,\quad f=g\quad \mbox{on }\partial B.
\]
  Substituting now \eqref{eq:curlDpsiin_2} and \eqref{eq:sDscurlpsiin0} into \eqref{eq:Dcurlpsiin} and using the expression in \eqref{eq:DJineta}, we arrive at  
\begin{equation}\label{eq:DJineta_2}
\la  \D_{\eta}  \J^{\text{in}},\delta \eta\ra = {\frac{9}4}\sin\theta \, {e_{\varphi}\cdot}(3\Id-\Lambda ) (\sin\theta \, \delta\eta {e_{\varphi}}) .
\end{equation}

\medskip

\noindent \emph{The outer problem.} Now, we consider the pullback of the outer elliptic problem \eqref{eq:psiout} to the outer domain $\bar B^c = \chi_{\eta}^{-1}(\Bout_{\eta})$. The vector field  $\phout_\eta = \psiout_\eta  \circ \chi_\eta$ is the decaying solution to the outer domain problem 
\begin{equation} \label{eq:phiout}
	\begin{cases}
			-\nabla \cdot \left( M_\eta \nabla \phout_\eta \right) = 0 & \mbox{ in } \bar B^c, \\
			\phout_\eta = h_\eta & \mbox{ on } \partial B,
	\end{cases}
\end{equation}
where 
\[
 h_\eta = \frac{1}{2}(s\sin \theta \, e_\varphi) \circ \chi_\eta,
\]
and $M_\eta$ was defined in \eqref{def Meta}.

{Establishing the differentiability of the solution for an inner domain problem is slightly easier than establishing it for an outer domain problem. We thus reflect \eqref{eq:phiout} to a problem on the unit ball $B$ by executing a Kelvin transformation: We consider
\[
\tilde x = \frac{x}{|x|^2},\quad \tilde \varphi_{\tilde \eta}(\tilde x) =  |x| \varphi_{\eta}(x),\quad  \tilde \eta(\tilde x) = \eta(x),\quad  \tilde h_{\tilde \eta}(\tilde x) = h_{\eta}(x),
\]
and obtain the inner domain problem
\begin{equation}
    \label{44}
\begin{cases}
    -\tilde \nabla \cdot\left( \tilde M_{\tilde \eta} \tilde \nabla \tilde \varphi_{\tilde \eta}^\text{out} \right)+  \frac{\tilde x}{|\tilde x|^2}\cdot \tilde M_{\tilde \eta}\tilde \grad\tilde \varphi^{\text{out}}_{\tilde \eta}+  |\tilde x|\tilde \nabla\cdot\left(\frac1{|\tilde x|^3} \tilde m_{\tilde\eta} \tilde x \tilde \varphi^{\text{out}}_{\tilde \eta}\right) =0&\mbox{ in }B,\\[2mm]
    \tilde \varphi^{\text{out}}_{\tilde \eta} =\tilde h_{\tilde \eta}&\mbox{ on }\partial B,
\end{cases}
\end{equation}}
\noindent with
\begin{align*}
\tilde M_{\tilde \eta}(\tilde x)  &= \left(\Id-2\frac{\tilde x}{|\tilde x|}\otimes \frac{\tilde x}{|\tilde x|}\right)M_{\eta}\left(\frac{\tilde{x}}{|\tilde x|^2}\right)\left(\Id-2\frac{\tilde x}{|\tilde x|}\otimes \frac{\tilde x}{|\tilde x|}\right) ,\\[2mm]
\tilde m_{\tilde \eta}(\tilde x) & = \left(\Id-2\frac{\tilde x}{|\tilde x|}\otimes \frac{\tilde x}{|\tilde x|}\right) M_{\eta}\left(\frac{\tilde x}{|\tilde x|^2}\right).
\end{align*}
Notice that for $\tilde \eta=0$, the matrix in the leading order term is the identity, $\tilde M_0  =\Id$ as $(\Id-2\frac{x}{|x|}\otimes\frac{x}{|x|})^2=\Id$, and the lower order terms cancel out,  i.e.,
\begin{equation*}
\frac{\tilde x}{|\tilde x|^2}\cdot \tilde M_{\tilde \eta}\tilde \grad\tilde \varphi^{\text{out}}_{\tilde \eta}+  |\tilde x|\tilde \nabla\cdot\left(\frac1{|\tilde x|^3} \tilde m_{\tilde\eta} \tilde x \tilde \varphi^{\text{out}}_{\tilde \eta}\right)=0 \quad \text{ for $\tilde\eta=0$.}
\end{equation*}
 As $\tilde \eta=0$ inside a ball of radius $1/2$, the elliptic problem is the Laplacian in the ball of radius $1/2$ and a small and regular perturbation of the limiting Laplace equation $ \tilde \Delta \tilde \varphi^{\text{out}}_0=0$.

Differentiability is now proved analogously to the homogeneous problem \eqref{eq:phiin}. This time, we consider the functional 
\[
N(\eta,\tilde \phi) = \left(-\tilde \nabla \cdot \left( \tilde M_{\tilde \eta}\tilde  \nabla \tilde \phi\right)+\frac{\tilde x}{|\tilde x|^2} \cdot \tilde M_{\tilde \eta} \tilde \grad\tilde \phi + |\tilde x| \tilde \nabla\cdot\left(\frac1{|\tilde x|^3}\tilde m_{\tilde \eta}\tilde x\tilde \phi\right), \tilde \phi|_{\Sp^2}-\tilde h_{\tilde \eta}\right).
\]
It is well-defined and continuously Fr\'echet differentiable as a mapping from $\M^{\alpha+2}\times \H^{\alpha+\frac52}(B)$ to $\H^{\alpha+\frac12}(B)\times \H^{\alpha+2}(\Sp^2)$. Moreover, because $N(0,\tilde \varphi_0^{\text{out}}) = (0,0)$ and 
\begin{equation*}
    \langle \D_{\tilde \phi} N(0,\tilde\phi)|_{\tilde \phi = \tilde \varphi_0^{\text{out}}}, \delta \tilde \phi \rangle = (-\Delta \delta \tilde \phi, \delta\tilde \phi|_{\Sp^2}) \colon \H^{\alpha+\frac52}(B) \to \H^{\alpha+\frac{1}{2}}(B) \times \H^{\alpha+2}(\Sp^2).
\end{equation*} 
This mapping is a diffeomorphism, and thus, by the implicit function theorem, for small $\eta$, the unique solution to \eqref{44} depends continuously Fr\'echet differentiable on $\tilde \eta$. Transforming back to the original problem, we get the desired result for $\phout.$

\medskip

From here, we deduce that
\begin{equation*}
	\J^{\mathrm{out}}_\eta \colon \M^{\alpha+2}  \to   \H_{\sym}^{\alpha +1}(\Sp^2) /_\const  , \quad \J^{\mathrm{out}}_\eta = \frac12\abs{\curl \phout_\eta-We_3}^2,
\end{equation*}
is well-defined and continuously Fr\'echet differentiable near $\eta=0$.

We compute the derivative. Since most of the computations are similar to the inner problem, we will present a more concise explanation.
Again, we start by analyzing the limiting function. In analogy to the spherical vortex \eqref{eq:psiHill}, we find that
\[
\psiout_0   =  \frac{1}{2s^2} \sin\theta \,  e_{\varphi}
\]
by \eqref{eq:psiHill}, an application of the curl operator (cf.~\eqref{eq:curlspher}) yields
\begin{equation}
    \label{eq:curlpsiout0} \curl \psiout_0   =  \frac{1}{s^3} \cos \theta \, e_s + \frac12 \frac1{s^3}\sin \theta\, e_{\theta},
\end{equation}
and thus, evaluation at the boundary gives
\begin{equation}
    \label{eq:curlpsiout0s1}
\left.\curl \psiout_0 \right|_{s=1}-   e_3 = \frac32  \sin\theta \,   e_{\theta},
\end{equation}
where we have used the change of basis formula $e_3 = \cos\theta \, e_s -\sin\theta \, e_{\theta}$.
Similarly, we have for the Neumann trace
\begin{equation}
    \label{eq:Dspsiout0}
\left.\partial_s\psiout_0\right|_{s=1} = -    \sin\theta \, e_{\varphi}.
\end{equation}
Arguing analogously to the above, we compute, using \eqref{eq:curlpsiout0s1}
\begin{align*}
    \la \D_{\eta}\J^{\text{out}} ,\delta \eta\ra & = \left(\curl \psiout_0 - e_3\right)\cdot \la \D_{\eta}  (\curl \psiout_{\eta})\circ \chi_{\eta},\delta \eta\ra \\
    & = \frac32  \sin \theta\,\left(\curl_{\theta} \la \D_{\eta} \psiout_{\eta},\delta \eta\ra + s\partial_s (\curl_{\theta}\psiout_0)\delta\eta\right).
\end{align*}
From the formula \eqref{eq:curlpsiout0} for the curl, we deduce that
\[
\left.\partial_s \curl_{\theta}\psiout_0\right|_{s=1} =-\frac32 \sin \theta.
\]
For the other term, we differentiate the elliptic problem \eqref{eq:phiout} and find by applying
 \eqref{eq:Dchi} and \eqref{eq:Dspsiout0},
\[
\Delta \la \D_{\eta}\psiout_{\eta},\delta \eta\ra =0\quad \mbox{in $B^c$},\quad \la \D_{\eta}\psiout_{\eta},\delta \eta\ra =\frac32  \sin\theta \, \delta \eta e_{\varphi}\quad \mbox{on }\partial B.
\]
Using once more the formula \eqref{eq:curlspher} for the curl, we conclude that
\[
\curl_{\theta} \la  \D_{\eta}\psiout_{\eta},\delta \eta\ra  = -\frac32   {e_{\varphi}\cdot}(\Id+\Lambda^{\text{out}})(\sin\theta \, \delta \eta{e_{\varphi}}) =\frac32  {e_\varphi\cdot}\Lambda^{\text{in}} (\sin \theta\, \delta \eta{e_{\varphi}}),
\]
where $\Lambda^{\text{out}}$ is the Dirichlet-to-Neumann operator associated with the outer domain problem,
that is, $\Lambda^{\text{out}} g = \partial_s f$, if 
\[
\laplace f = 0\quad\mbox{in }B^c,\quad f=g\quad \mbox{on }\partial B,
\]
which we have rewritten in terms of the  Dirichlet-to-Neumann operator associated to the inner problem via the identity
\[
\Lambda^{\text{out}} = - \Lambda - \Id.
\]
The latter can be verified by translating the Dirichlet problem on $B^c$ via the Kelvin transform into a Dirichlet problem on $B$.

We combine all estimates, use the previous identity, and find
\[
\la \D_{\eta}\J^{\text{out}} ,\delta\eta\ra  = \frac94   \sin\theta \, {e_{\varphi}\cdot}(\Lambda-\Id)(\sin \theta\,\delta \eta \, {e_{\varphi}}).
\]

\medskip

\noindent
\emph{Conclusion.} Combining the previous two derivations and invoking the relation \eqref{19}, we arrive at
\begin{align*}
 \mel \la \left.\D_{\eta}\right|_{\eta = 0} \J (\wein,\wein,\eta),\delta \eta\ra\\
  &= {\frac{9}4}  \wein  \sin\theta \, {e_{\varphi}\cdot}(3\Id -\Lambda)(\sin \theta\, \delta \eta{e_{\varphi}})+\frac94  \wein \sin\theta \, {e_{\varphi}\cdot}(\Id -\Lambda)(\sin \theta\, \delta \eta \, {e_{\varphi}})\\
  & = \frac92 \wein  \sin \theta\,{e_{\varphi}\cdot}(2 \Id -\Lambda)(\sin \theta\, \delta \eta \, {e_{\varphi}}). \qedhere
\end{align*}
\end{proof}

\subsection{Properties of the linear operator}

We start with the discussion of the invertibility of the surface tension term. It will be crucial when analyzing the regime of large surface tensions.
 
\begin{lemma}\label{lem:deriinv}
	For $\alpha \ge 0$ the operator 
	\begin{equation*}
		-\left(\Delta_{\Sp^2}+2\Id\right) \colon T_0\M^{\alpha+2} \to  \H_{\sym}^{\alpha}(\Sp^2) /_\const
	\end{equation*}
	is an isomorphism.
\end{lemma}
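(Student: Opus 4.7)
The plan is to diagonalise $-(\Delta_{\Sp^2}+2\Id)$ in the spherical harmonic basis and read off that all eigenvalues that survive the symmetry/quotient reductions are strictly positive and bounded away from zero, so that the operator acts as a Fourier multiplier with multipliers comparable to $(1+l)^2$, hence intertwines the two Sobolev norms.

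First I would describe the two spaces in terms of spherical harmonic coefficients. By Lemma \ref{lem:charX}, $\H^{\alpha+2}_{\sym}(\Sp^2)$ consists of those functions whose only nonzero coefficients $\la f,Y_l^m\ra$ occur for $m=0$ and $l$ even. The condition $\int_{\Sp^2}\eta\, d\sigma=0$ defining the tangent space in Lemma \ref{lem:M} is exactly $\la\eta,Y_0^0\ra=0$. Hence
\[
T_0\M^{\alpha+2}\;=\;\Bigl\{\eta\in\H^{\alpha+2}(\Sp^2)\;:\;\la\eta,Y_l^m\ra=0\text{ unless }m=0\text{ and }l\text{ even and }l\ge 2\Bigr\},
\]
with the norm $\|\eta\|_{\H^{\alpha+2}}^2=\sum_{l\text{ even},\,l\ge 2}(1+l)^{2(\alpha+2)}|\la\eta,Y_l^0\ra|^2$. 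Analogously, $\H^{\alpha}_{\sym}(\Sp^2)/_{\const}$ is canonically isometric to the closed subspace of $\H^{\alpha}_{\sym}(\Sp^2)$ where the $l=0$ coefficient vanishes, i.e.\ spanned in the Sobolev sense by $\{Y_l^0:l\text{ even},\,l\ge 2\}$ with the analogous weighted norm.

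Next, I would invoke that spherical harmonics are eigenfunctions of the Laplace--Beltrami operator, $-\Delta_{\Sp^2}Y_l^m=l(l+1)Y_l^m$, so that
\[
-\bigl(\Delta_{\Sp^2}+2\Id\bigr)Y_l^m=\bigl(l(l+1)-2\bigr)Y_l^m.
\]
On the index set $\{l\text{ even},\,l\ge 2\}$ that indexes both $T_0\M^{\alpha+2}$ and (a representative of) $\H^{\alpha}_{\sym}(\Sp^2)/_{\const}$, the eigenvalue $l(l+1)-2$ takes the values $4,18,40,\dots$ and satisfies the two-sided bound
\[
\tfrac{1}{3}(1+l)^2\;\le\;l(l+1)-2\;\le\;(1+l)^2\qquad\text{for all even }l\ge 2.
\]
Here the critical point is that the potential zero of the symbol at $l=1$ is avoided precisely because odd $l$ are excluded by the symmetry assumption, and the zero at $l=0$ coming from constants is avoided because of the volume constraint on the source and the quotient on the target.

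Finally, the computation
\[
\|{-}(\Delta_{\Sp^2}+2\Id)\eta\|_{\H^{\alpha}}^2=\sum_{l\text{ even},\,l\ge 2}(1+l)^{2\alpha}\bigl(l(l+1)-2\bigr)^2|\la\eta,Y_l^0\ra|^2,
\]
combined with the two-sided bound on the eigenvalues, gives
\[
\tfrac{1}{9}\|\eta\|_{\H^{\alpha+2}}^2\;\le\;\|{-}(\Delta_{\Sp^2}+2\Id)\eta\|_{\H^{\alpha}/_{\const}}^2\;\le\;\|\eta\|_{\H^{\alpha+2}}^2,
\]
which shows that the map is bounded below and above, hence an isomorphism onto its image. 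Surjectivity is immediate since any element of $\H^{\alpha}_{\sym}/_{\const}$ can be represented as $\sum_{l\ge 2\text{ even}} b_l Y_l^0$, and setting $a_l=b_l/(l(l+1)-2)$ defines a preimage in $T_0\M^{\alpha+2}$. No genuine obstacle appears; the only thing to watch is the bookkeeping of which modes are removed by which constraint.
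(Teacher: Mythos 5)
Your proof is correct and follows essentially the same route as the paper: diagonalise in the spherical harmonic basis $\{Y_l^0 : l \text{ even}, l\ge 2\}$ and observe that the symbol $(l-1)(l+2)$ is nonzero and comparable to $(1+l)^2$, yielding an isomorphism with a loss of two derivatives. The paper states this very tersely; you have simply spelled out the bookkeeping of which modes are removed by the symmetry and the zero-mean/quotient constraints and the two-sided multiplier bound.
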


\begin{proof}
	With respect to the orthonormal basis $\{ Y_l^0 : l \in \N_0, \ l \neq 1 \}$ the operator is the multiplication operator with nonzero symbol $-(l+2)(l-1)$ and thus invertible with a loss of two derivatives. Note that the zero mean condition matches the fact that we take the quotient space with respect to constant functions in the image space.
\end{proof}

We turn our attention to the full operator. For notational convenience, we introduce 
\begin{equation} \label{eq:A}
\begin{aligned}
	[\mathcal{A}(\mu)](\delta \eta) & =\frac{2}{9\wein} \langle \left.\D_{\eta}\right|_{\eta=0} \F (\wein,\wein,\eta),\delta \eta\ra \\
    & =  \sin \theta\,{e_{\varphi}\cdot}(2\Id -\Lambda)(\sin \theta\, \delta \eta \, {e_{\varphi}}) -\mu  (\Delta_{\Sp^2}+2 \Id)\delta \eta,
    \end{aligned}
\end{equation}
where we write $\mu  = 2/(9\wein)$ in the following.  For our analysis, it will be beneficial to rewrite the linear operator $\mathcal{A}$ with the help of spherical harmonics. This can be achieved by expressing the function $\delta\eta$ introduced in Lemma \ref{lem:M} in terms of spherical harmonics. According to the characterisation \eqref{eq:defX} and because $\delta \eta$ has zero mean, we have the expansion
\[
\delta \eta(\theta) = \sum_{k=1}^{\infty} v_k Y_{2k}^0(\theta),
\]
for some $v_k \in \R$. Precise properties of the coefficients will be discussed later. Next, we identify $\A(\mu)$ as an operator on sequences $(v_k)_{k \in \N}$. For $\alpha  \ge 0 $ we set
\begin{equation*}
    \h^\alpha:=\left\{v = (v_k)_{k\in \N}\, :\, \norm{v}_{\h^\alpha}^2 := \sum_{k = 1}^\infty k^{2\alpha}v_k^2<\infty\right\},
\end{equation*}
which is a Banach space. We have the compact embedding $\h^{\alpha}\hookrightarrow \h^{\alpha-\beta}$ for all $\beta>0$, and $\h^\alpha$ is isomorphic to $\H_{\sym}^{\alpha}(\Sp^2)/_\const$ and isomorphic to $\left\{ \eta \in \H_{\sym}^{\alpha}(\Sp^2)  : \int_{\Sp^2} \eta \dx \sigma = 0 \right\}$ via $(v_k)_{k\in \N}\mapsto \sum\limits_{k = 1}^\infty v_k Y_{2k}^0$. 

\begin{lemma} \label{lem:rep}
For $\alpha \ge 0$ we have that $\A(\mu) \colon \h^{\alpha+2} \to \h^{\alpha}$ is symmetric with the representation
    \begin{equation}\begin{aligned}
    &[\A(\mu)]\left((v_k)_{k \in \N}\right) := [\A(\mu)]\left(\sum_{k=1}^{\infty}v_k Y_{2k}^0\right)\\
    &= \sum_{k=2}^{\infty}\left(A_k(\mu)v_k + B_kv_{k-1}+C_kv_{k+1}\right) Y_{2k}^0 +\left(A_1(\mu)v_1 + C_1v_2\right)Y_2^0 + C_0v_1 Y_0^0, \label{def cal A}
\end{aligned}\end{equation}
where 
\begin{align*}
    A_k(\mu) &= \mu (2k-1)(2k+2) -\frac{{2}k(2k-3)(2k-1)}{(4k+1)(4k-1)} -\frac{(2k-1)(2k+1)({2}k+{2})}{(4k+1)(4k+3)},\\
    B_{k}  &= \frac{2k(2k-3)(2k-1)}{(4k-3)(4k-1)} \sqrt{\frac{4k-3}{4k+1}},\\
    C_{k}  &= \frac{(2k-1)(2k+1)(2k+2)}{(4k+3)(4k+5)} \sqrt{\frac{4k+5}{4k+1}}.
\end{align*}
\end{lemma}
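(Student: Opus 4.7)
The plan is to compute $\mathcal{A}(\mu)$ directly on the basis $\{Y_{2k}^0\}_{k\ge 1}$, read off the tridiagonal matrix entries, and deduce symmetry from the identity $B_k = C_{k-1}$. The diagonal curvature contribution is immediate: since $-\Delta_{\Sp^2} Y_{2k}^0 = 2k(2k+1) Y_{2k}^0$, we have $-\mu(\Delta_{\Sp^2}+2\Id)Y_{2k}^0 = \mu(2k-1)(2k+2)Y_{2k}^0$, which accounts for the first term of $A_k(\mu)$.

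The content lies in the jump term $T(\delta\eta) := \sin\theta\, e_\varphi\cdot(2\Id-\Lambda)(\sin\theta\,\delta\eta\,e_\varphi)$, for which the key input will be the eigen-identity
\[
\Lambda(P_l^1(\cos\theta)\,e_\varphi) = l\cdot P_l^1(\cos\theta)\,e_\varphi \qquad \text{for } l \ge 1.
\]
To see this, I decompose $e_\varphi = -\sin\varphi\,e_1 + \cos\varphi\,e_2$. The Cartesian components $P_l^1(\cos\theta)\cos\varphi$ and $P_l^1(\cos\theta)\sin\varphi$ are, up to constants, real spherical harmonics of degree $l$ and order $\pm 1$, hence eigenfunctions of the scalar Dirichlet-to-Neumann map $\Lambda$ with eigenvalue $l$; recombining the Cartesian components yields the claim.

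Next, I combine this eigen-identity with the two Legendre relations
\[
\sin\theta\, P_l(\cos\theta) = \tfrac{1}{2l+1}\bigl(P_{l-1}^1 - P_{l+1}^1\bigr)(\cos\theta),\quad \sin\theta\, P_l^1(\cos\theta) = \tfrac{l(l+1)}{2l+1}\bigl(P_{l+1} - P_{l-1}\bigr)(\cos\theta),
\]
both of which follow from $P_l^1 = -\sqrt{1-x^2}\,P_l'$ together with the recurrences $P_{l+1}'-P_{l-1}' = (2l+1)P_l$ and $(1-x^2)P_l' = \tfrac{l(l+1)}{2l+1}(P_{l-1}-P_{l+1})$. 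Writing $Y_{2k}^0 = c_{2k,0}\,P_{2k}(\cos\theta)$ with $c_{l,0}=\sqrt{(2l+1)/(4\pi)}$, the computation proceeds in three moves: first, the first identity expresses $\sin\theta\,Y_{2k}^0\,e_\varphi$ as a combination of $P_{2k\pm 1}^1(\cos\theta)\,e_\varphi$; second, applying $(2\Id-\Lambda)$ via the eigen-identity multiplies these components by $3-2k$ and $1-2k$; third, multiplying by $\sin\theta\,e_\varphi\cdot$ and invoking the second identity re-expands the result in $P_{2k-2}, P_{2k}, P_{2k+2}$. Returning to the $Y_l^0$-basis through $c_{2k,0}/c_{2k-2,0} = \sqrt{(4k+1)/(4k-3)}$ and $c_{2k,0}/c_{2k+2,0} = \sqrt{(4k+1)/(4k+5)}$ and collecting terms yields, after the appropriate index shifts, the announced expressions for $C_{k-1}$, the jump contribution to $A_k(\mu)$, and $B_{k+1}$ as the coefficients of $Y_{2k-2}^0, Y_{2k}^0, Y_{2k+2}^0$ in $\mathcal{A}(\mu)Y_{2k}^0$.

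Boundedness $\mathcal{A}(\mu)\colon\h^{\alpha+2}\to\h^\alpha$ is immediate from the explicit formulas, since $A_k(\mu)=O(k^2)$ while $B_k,C_k=O(1)$ as $k\to\infty$; symmetry reduces to the identity $B_k=C_{k-1}$, confirmed by a direct cancellation in the explicit formulas. The boundary term $C_0v_1Y_0^0$ arises at $k=1$ from $\sin\theta\,P_1^1(\cos\theta)=-\sin^2\theta=\tfrac{2}{3}(P_2-P_0)$, and one checks that the general $C_k$-formula at $k=0$ recovers $C_0=-2\sqrt{5}/15$, so the formula extends consistently to the boundary. The only non-routine ingredient is the eigen-identity for $\Lambda$ on axisymmetric $e_\varphi$-vector fields: although $\Lambda$ was defined in Proposition \ref{P1} as the \emph{scalar} Dirichlet-to-Neumann map, its application to a vector argument has to be routed through the Cartesian decomposition above, and this identification is the main obstacle. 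Once it is accepted, the rest is routine bookkeeping with the two Legendre recurrences.
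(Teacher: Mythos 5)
Your proof is correct and follows essentially the same route as the paper: the curvature term is diagonal, the jump term is treated via the two Legendre recurrences and the eigenvalue relation for the Dirichlet-to-Neumann map, and symmetry is read off from $B_k=C_{k-1}$. The one place you differ is cosmetic: where the paper ``generously identifies'' $e_\varphi = ie^{i\varphi}$ and works with the complex spherical harmonics $Y_l^1$, you decompose $e_\varphi$ into Cartesian components and apply the scalar Dirichlet-to-Neumann map componentwise -- this is the same calculation in real rather than complex form, and arguably a cleaner way to justify that the scalar operator $\Lambda$ acts on the azimuthal vector field with eigenvalue $l$.
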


\begin{proof}
    We discuss the linear operator term by term, starting with the simplest one, the curvature term. Recalling that spherical harmonics of degree $l$ are eigenfunctions of the Laplace--Beltrami operator for the eigenvalue $-l(l+1)$, we observe that
\[
(\Delta_{\Sp^2}+2\Id )Y_{2k}^0 = -  (2k-1)(2k+2) Y_{2k}^0.
\]
We now address the term that involves the Dirichlet-to-Neumann operator. We recall that the spherical harmonics can be expressed in terms of the associated Legendre polynomials, $Y_{2k}^0(\theta) = c_{2k,0}P_{2k}^0(\cos(\theta)) = c_{2k,0}P_{2k}^0(t)$ with $t=\cos\theta$, and we remark that we have the recurrence formula for these polynomials
\begin{equation}\label{eq:recursionPlm}
\sqrt{1-t^2} P^m_l(t) = (2l+1)^{-1}\left(P_{l-1}^{m+1}(t)-P_{l+1}^{m+1}(t)\right),
\end{equation}
which can be deduced from \cite[eqs.\ (8.5.1) and (8.5.3)]{Abramowitz1965-dz}. We generously identify $e_{\varphi}=ie^{i\varphi}$, and deduce
\begin{equation*}\begin{aligned}
    \sin\theta \,  Y_{2k}^0(t) \, e_{\varphi} & =   \frac{c_{2k,0}}{4k+1}   \left(P_{2k-1}^1 (\cos\theta) - P_{2k+1}^1(\cos\theta)\right)e_{\varphi}\\
    & = i  \frac{c_{2k,0}}{4k+1} \left( \frac{1}{c_{2k-1,1}}Y_{2k-1}^1(\theta,\varphi) - \frac{1}{c_{2k+1,1}}Y_{2k+1}^1(\theta,\varphi)\right).
\end{aligned}\end{equation*}
Because the functions $s^{l}Y_l^m(\theta,\varphi)$ are harmonic in $B$ (see \cite{Muller2014-up}), the spherical harmonics $Y_l^m$ are eigenfunctions of the Dirichlet-to-Neumann operator $\Lambda$ for the eigenvalue $l$. Therefore,
\begin{align*}
   (2\Id-\Lambda) (\sin\theta \, Y_{2k}^0 \, e_{\varphi} )& = i    \frac{c_{2k,0}}{4k+1}  \left(\frac{3-2k}{c_{2k-1,1}}Y_{2k-1}^1(\theta,\varphi) -  \frac{1-2k}{c_{2k+1,1}}Y_{2k+1}^1(\theta,\varphi)\right)\\
    & =   - \frac{c_{2k,0}}{4k+1}  \left((2k-3)P_{2k-1}^1(t ) - (2k-1)P_{2k+1}^1(t)\right)e_{\varphi}.
\end{align*}
Noticing that
\begin{equation*}
\sqrt{1-t^2}P_l^1(t) = \frac{l(l+1)}{2l+1} \left(P_{l+1}^0(t) - P^0_{l-1}(t)\right),
\end{equation*}
which is a consequence of Equations (8.5.1) and (8.5.3) in \cite{Abramowitz1965-dz},
helps us to rewrite
\begin{align*}
\mel  \sin\theta \,   e_{\varphi} \cdot (2\Id-\Lambda) (\sin\theta \, Y_{2k}^0 \, e_{\varphi} )\\
 & =   \left(-\frac{2k(2k-3)(2k-1)}{(4k+1)(4k-1)}-\frac{(2k-1)(2k+1)(2k+2)}{(4k+1)(4k+3)}\right)  Y_{2k}^0\\
 &\quad +  \frac{(2k-1)(2k+1)(2k+2)}{(4k+1)(4k+3)} \frac{c_{2k,0}}{c_{2k+2,0}} Y_{2k+2}^0\\
 &\quad +  \frac{2k(2k-3)(2k-1)}{(4k+1)(4k-1)}\frac{c_{2k,0}}{c_{2k-2,0}} Y_{2k-2}^0.
\end{align*}
We eventually combine all the previous calculations in order to express the linear operator $\A$ in \eqref{eq:A} in terms of spherical harmonics, 
\begin{equation*}
	 \A Y_{2k}^0(t)  = A_k Y_{2k}^0(t) + B_{k+1} Y_{2k+2}^0(t) + C_{k-1} Y_{2k-2}^0(t), 
\end{equation*}
where the coefficients $A_k$, $B_k$ and $C_k$ are given as in the statement of the lemma.

Note that $B_k=C_{k-1}$, in particular, the operator $\mathcal{A}$ is symmetric.
Going back to the linear combination $\delta \eta$ and performing two index shifts gives \eqref{def cal A}.
\end{proof}

We have to analyze the kernel of this operator.

\begin{proposition}\label{prop A}
Let $\alpha \geq 0$. 
\begin{enumerate}[a)]
    \item For any $\mu\neq 0$, the operator $\A(\mu): \h^{\alpha + 2} \to \h^{\alpha}$ is a symmetric Fredholm operator of index $0$.
    \item For any $\mu>0$, the nullspace $N(\A(\mu))$ of $\A(\mu)$ is at most one-dimensional and $N(\A(\mu)) \subset \h^\beta$ for all $\beta \ge 0$. Moreover, $N(\A(\mu)) = \{0\}$ for $\mu \le 0$.
    \item There exists a strictly decreasing sequence $(\mu_k)_{k\in \N}\subset \R^+$ with limit $0$ such that $\A({\mu_k})$ has a $1$-dimensional nullspace and $A(\mu)$ is invertible if $\mu \notin \{ \mu_k : k \in \N \} \cup \{0 \}$. 
    \item We have $\mu_1 \le \frac{\sqrt{2}}{21 \sqrt{5}}+\frac{\sqrt{5}}{22\sqrt{13}}+\frac{127}{2079} \approx 0.119394$.
    \item If $0\neq v^k\in N(\A({\mu_k}))$, then the transversality condition 
    \begin{equation*}
        \mathrm{D}_{\mu}\A(\mu)\big|_{\mu=\mu_k}v^k\notin R(\A({\mu_k}))
    \end{equation*}
    holds true.
\end{enumerate}
\end{proposition}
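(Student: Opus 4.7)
The plan is to exploit the decomposition $\A(\mu) = K + \mu L$, where $L := -(\Delta_{\Sp^2} + 2\Id)$ and $K$ is the jump contribution, together with the Jacobi matrix representation from Lemma \ref{lem:rep} (symmetric because $B_{k+1} = C_k$).

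For part (a), Lemma \ref{lem:deriinv} gives that $L : \h^{\alpha+2} \to \h^\alpha$ is an isomorphism. Since $B_k, C_k$ grow only linearly in $k$ while $A_k$ grows quadratically, $K$ maps $\h^{\alpha+2}$ boundedly into $\h^{\alpha+1}$, and composing with the compact embedding $\h^{\alpha+1} \hookrightarrow \h^\alpha$ shows that $K : \h^{\alpha+2} \to \h^\alpha$ is compact. For $\mu \ne 0$, $\A(\mu)$ is then a compact perturbation of the isomorphism $\mu L$, hence a symmetric Fredholm operator of index zero.

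For part (b), writing $\A(\mu) v = 0$ in the basis $\{Y_{2k}^0\}_{k \ge 1}$ produces the first equation $A_1(\mu) v_1 + C_1 v_2 = 0$ together with the three-term recurrence $A_k(\mu) v_k + B_k v_{k-1} + C_k v_{k+1} = 0$ for $k \ge 2$. Since $C_k \ne 0$, $v_1$ determines the entire sequence and the kernel is at most one-dimensional. Smoothness follows because $|A_k(\mu)|/C_k \to \infty$ for $\mu \ne 0$, so any $\ell^2$ solution is the minimal one, which decays super-polynomially and lies in every $\h^\beta$. To rule out a nontrivial kernel for $\mu \le 0$, I would split cases. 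For $\mu < 0$, the quadratic form $\la \A(\mu) v, v\ra = \mu \la L v, v\ra + \la K v, v\ra$ is strictly negative for $v \ne 0$ once I establish that $K$ is negative semi-definite; writing $K = T^\ast (2\Id - \Lambda) T$ with $T v = \sin\theta\, v\, e_\varphi$ and using that $\Lambda$ has eigenvalue $l$ on spherical harmonics of degree $l$, semi-definiteness reduces to an explicit sign check on the Jacobi matrix. For $\mu = 0$, the ratios $A_k(0)/C_k \to -2$ and $B_k/C_k \to 1$ yield the asymptotic recurrence $v_{k+1} \approx 2 v_k - v_{k-1}$ with double characteristic root $\lambda = 1$, so every nontrivial solution grows at least linearly, no nontrivial $\ell^2$ solution exists, and $K$ is injective.

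For part (c), I would set $T := -L^{-1} K : \h^{\alpha+2} \to \h^{\alpha+2}$. This is compact because $L^{-1}$ gains two derivatives while $K$ loses only one, and it is self-adjoint with respect to the weighted inner product $\la u, v\ra_L := \la L u, v\ra$, since $\la T u, v\ra_L = -\la K u, v\ra = -\la u, K v\ra = \la u, T v\ra_L$ by the symmetry of $K$. The spectral theorem for compact self-adjoint operators then delivers a discrete real spectrum accumulating only at $0$, with one-dimensional eigenspaces by part (b) and all eigenvalues strictly positive again by part (b), yielding the sequence $(\mu_k)_{k \in \N}$. Part (e) follows immediately: $\D_\mu \A(\mu) = L$, so transversality reads $L v^k \notin R(\A(\mu_k))$, and self-adjoint Fredholm theory identifies $R(\A(\mu_k)) = N(\A(\mu_k))^\perp = (v^k)^\perp$; the condition thus reduces to $\la L v^k, v^k\ra \ne 0$, which is true by strict positivity of $L$ on the zero-mean symmetric subspace.

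The main obstacle is the explicit upper bound in part (d). Rewriting this bound as $M L + K \ge 0$ in the quadratic form sense for $M$ equal to the claimed constant, I would pursue a low frequency/tail split: analyze the $3 \times 3$ leading block of $M L + K$ explicitly, matching the algebraic form of the bound (the appearance of $\sqrt{10}$ and $\sqrt{65}$ strongly suggests that this truncation produces the stated expression through its characteristic cubic), and control the tail $k \ge 4$ via a diagonal-dominance argument exploiting the asymptotic behavior of $A_k(\mu), B_k, C_k$ from Lemma \ref{lem:rep}.
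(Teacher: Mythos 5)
Parts (a), (c), and (e) match the paper's proof in all essentials: the same compact-perturbation Fredholm argument, the same conjugation to a compact self-adjoint operator (your $T = -L^{-1}K$ is conjugate to the paper's $\K = \B^{-1}\A^2\B^{-1}$ via $\B = (\A^1)^{1/2}$), and the same use of symmetry for the transversality condition. Your Poincar\'e--Perron argument for the kernel regularity when $\mu\neq 0$ is a legitimate alternative to the paper's bootstrap from $\mu\A^1 v=\A^2 v$. However, two of your steps have genuine gaps.

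First, the claim ``$K$ is negative semi-definite'' for part (b) is not established. You suggest that this ``reduces to an explicit sign check on the Jacobi matrix,'' but definiteness of a Jacobi matrix is \emph{not} determined by the signs of its entries: the off-diagonal terms $B_k,C_k$ and the diagonal $A_k(0)$ are all of order $k$, so Gershgorin circles for $K$ alone straddle zero, and Sylvester-type criteria are not elementary for an infinite matrix. Moreover, $2\Id-\Lambda$ is \emph{not} negative semi-definite on the relevant image: $Tv=\sin\theta\,v\,e_\varphi$ has a $Y_1^1$ component, on which $2-\Lambda$ acts with eigenvalue $+1$. The desired inequality $\la(2\Id-\Lambda)Tv,Tv\ra\le 0$ thus reflects a nontrivial constraint that must be proved, not checked by signs. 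Your fallback for $\mu=0$ is also incomplete: the limiting recurrence $v_{k+1}\approx 2v_k-v_{k-1}$ has the double characteristic root $1$, which is exactly the degenerate case where Poincar\'e--Perron gives no asymptotic control without additional analysis of the perturbation. The paper sidesteps both issues at once: it renormalizes $w_k=\sqrt{4k+1}\,v_k$ and proves directly by induction that $w_{k+1}>w_k$ for every $\mu\le 0$, using the exact algebraic identity
\[
-\frac{A_k(0)}{C_k}\sqrt{\tfrac{4k+5}{4k+1}}-\frac{B_k}{C_k}\sqrt{\tfrac{4k+5}{4k-3}}
=1+\frac{4(40k^2+38k+3)}{(k+1)(2k+1)(4k-3)(4k-1)(4k+1)}>1,
\]
so that any kernel sequence fails to lie in $\h^{\alpha+2}$ already for $\mu=0$. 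This one computation replaces both your cases. (Incidentally, this monotonicity argument is essentially the proof that $K\le 0$, so proving $K\le 0$ independently and then deducing triviality of the kernel for $\mu<0$ risks circularity unless you supply a direct argument.)

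Second, your plan for part (d) misidentifies where the bound comes from. The expression $\frac{\sqrt{2}}{21\sqrt{5}}+\frac{\sqrt{5}}{22\sqrt{13}}+\frac{127}{2079}$ is not the root of a cubic characteristic polynomial of a $3\times 3$ truncation; it is the \emph{row sum} $\K_{2,2}+|\K_{2,1}|+|\K_{2,3}|$ in a Gershgorin circle theorem for the infinite Jacobi matrix $\K$ (the paper cites a version for infinite matrices together with the $\ell^1$ membership of eigenvectors from part (b)). The square roots $\sqrt{10}$ and $\sqrt{65}$ appear because the off-diagonal entries $\K_{l,l\pm 1}$ carry them, not because a cubic discriminant produces them. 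One then shows $\kappa(l):=\K_{l,l}+|\K_{l,l-1}|+|\K_{l,l+1}|$ is maximized at $l=2$ via an explicit monotone bound for $l\ge 5$ and direct evaluation for $l\le 4$. Your low-frequency/tail split could in principle produce \emph{a} bound, but not the stated one, and you have not carried it out.
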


\begin{proof}
We shall further split 
\begin{equation*}
\A(\mu)=\mu \A^1-\A^2,
\end{equation*}
where $\A^1$ and $\A^2$ denote the parts of $\mathcal{A}$ that are linear respectively constant in $\mu$. We start with observing that both $\A^1$ and $\A^2$ are symmetric operators, that is,
\begin{equation}
    \label{28}
    \la v ,\A^i w\ra  = \la \A^i v,w\ra,\quad  \mbox{for any }v,w\in \h^{\alpha+2},
\end{equation}
(with the usual $\ell^2$-scalar product) as a consequence of \eqref{def cal A}. This can also be seen on the level of functions from the symmetry of the Dirichlet-to-Neumann and Laplace--Beltrami operators. Moreover, $\A^1$ is positive as it is given by an infinite diagonal matrix with positive entries.

We will now prove the precise statements of the proposition.

\medskip
\noindent
a) The operator $\mathcal{A}^1$ is invertible and hence Fredholm with index $0$ as a map from $\h^{\alpha+2}$ to $\h^{\alpha}$. The operator $\A^2$ maps $\h^{\alpha+2}$ to $\h^{\alpha+1}$ and is hence compact with respect to $\mu\A^1$, because $\mu\not=0$ by assumption.  Therefore, $\mu\A^1-\A^2=\A(\mu)$ is a compact perturbation of a Fredholm operator of index zero and thus it is Fredholm with index $0$, too. The symmetry is stated for the individual ingredients in \eqref{28}.

\medskip
\noindent
b) To see that the nullspace for any $\mu\neq 0$ is at most $1$-dimensional, we observe that $\mathcal{A}({\mu})v=0$ is equivalent to the recurrence equation 
\begin{equation} \label{eq:rec}
    \begin{cases}
        v_{k+1}={-}\frac{A_k(\mu)}{C_k}v_k{-}\frac{B_k}{C_k}v_{k-1} \quad \text{ for $k\geq 2$},\\[2pt]
        A_1(\mu)v_1+C_1v_2=0.
    \end{cases}
\end{equation}
As $C_1\neq 0$, we note that $v_1$ determines all other numbers $v_k$ in this recursion and hence the space of solutions is $1$-dimensional and might or might not be a subspace of $\h^{\alpha+2}$. Indeed, the Poincar\'e--Perron theorem \cite[Theorem 8.35]{elyadi_difference_2005} implies that there exist exactly two linearly independent solutions of the recurrence equation without the initial condition. One of these grows of factorial order $8\mu k$ while the other decays of factorial order $-8\mu k$ as $k \to \infty$. The difficulty is to understand how to initial condition prescribed by $ A_1(\mu)v_1+C_1v_2=0$ relates to the behavior at infinity. 

Regarding the regularity statement, we notice that if $v\in \h^{\alpha+2}$ lies in the kernel of $\A(\mu)$, then
\begin{equation}\label{26}
\mu \A^1v=\A^2 v.
\end{equation}
From the regularity properties of $\A^2$ it follows that $\A^2 v\in \h^{\alpha+1}$ and as $\A^1$ is invertible from $\h^{\alpha+1}$ to $\h^{\alpha+3}$ we deduce $v\in \h^{\alpha+3}$. Iteration shows the statement.
\medskip

Next, we prove that for $\mu \le 0 $, the kernel is trivial, $N(\A(\mu)) = \{ 0 \}$. We achieve this by showing that any non-trivial sequence $v=(v_k)_{k\in\N} $ for which $[\A(\mu)](v)$ vanishes  cannot belong to $\h^{\alpha+2} $. 

Let $v$ be such an element. In view of the recurrence equations \eqref{eq:rec}, the first element $v_1$ is necessarily non-zero. Without loss of generality, we suppose that it is positive, $v_1>0$. In order to simplify the recurrence equation, we write $w_k = \sqrt{4k+1}v_k$. Using the monotonicity $A_1(\mu) <A_1(0)$, we obtain for the second element,
\[
w_2 =-\frac{3A_1(\mu)}{\sqrt{5}C_1}w_1 \ge -\frac{3A_1(0)}{\sqrt{5}C_1}w_1 = \frac{11}{10}w_1,
\]
and thus, $w_{k+1}>w_{k}>0$ for $k=1$. We aim to argue by induction and suppose that we have already established the monotonicity $w_{k+1}>w_{k}>0$ for some $k\ge1$. Using that $A_k(\mu)<A_k(0)$ for any $\mu<0$ and invoking the induction hypothesis, we deduce from the recurrence equation \eqref{eq:rec} that
\begin{align*}
w_{k+1} &\ge -\frac{A_k(0)}{C_k}\sqrt{\frac{4k+5}{4k+1}}w_k - \frac{B_k}{C_k} \sqrt{\frac{4k+5}{4k-3}}w_{k-1}\\
&\ge  - \left(\frac{A_k(0)}{C_k}\sqrt{\frac{4k+5}{4k+1}}+\frac{B_k}{C_k} \sqrt{\frac{4k+5}{4k-3}}\right)w_{k},
\end{align*}
for any $k\ge 2$. We compute and estimate the coefficient,
\begin{align*}
    \mel
-\frac{A_k(0)}{C_k}\sqrt{\frac{4k+5}{4k+1}}-\frac{B_k}{C_k} \sqrt{\frac{4k+5}{4k-3}}  =  1+  \frac{4 (40 k^2 + 38 k + 3)}{(k + 1) (2 k + 1) (4 k - 3) (4 k - 1) (4 k + 1)}>1 .
\end{align*}
and thus, $w_{k+1}>w_k$ follows.

Being a nonnegative increasing sequence, $(w_k)_{k\in\N}$ does not belong to $\h^{\alpha+1/2}$. Consequently, $(v_k)_{k\in\N}$ is not an element of $\h^{\alpha+2}$.

\medskip
\noindent
c) The above equation \eqref{26} for the kernel of $\A(\mu)$ can be rewritten as an eigenvalue problem for a self-adjoint compact operator. Indeed, because $\A^1$ is diagonal and positive definite, there exists a positive definite diagonal operator $\B$ (which is simply the square root of the diagonal matrix) such that $\A^1=\B\B$. In particular,  $v$ is in the kernel of $\A({\mu})$, cf.~\eqref{26}, precisely if $w=\B v$ is an eigenvector of the operator $\K = \B^{-1}\A^2\B^{-1}$, i.e. 
\[
\K w= \mu w.
\]
By construction and because both $\A^2$ and $\B$ are symmetric, the new operator $\K$ is symmetric as well. It is self-adjoint because it is compact (and thus, in particular, necessarily bounded): As $\B^{-1}$ maps $\h^{\alpha}$ to $\h^{\alpha+1}$ and $\A^2$ maps $\h^{\alpha+1}$ to $\h^{\alpha}$, the operator $\K$ maps $\h^{\alpha}$ to $\h^{\alpha+1}$, and it is thus compact as an operator from $\h^{\alpha}$ to $\h^{\alpha}$. Hence, by the spectral theorem for self-adjoint operators, $\K$ has countably infinite real spectrum, $\sigma(\K) = \{\mu_k\}_{k\in\N} \cup \{0 \} \subset \R$, where $\mu_k$ is a sequence of eigenvalues with limit zero. By b) we cannot have {non-positive eigenvalues and hence may assume that $(\mu_k)_{k\in\N}$ is a decreasing sequence of positive real numbers.

The statement now follows because eigenvectors are precisely the non-trivial elements of $N(\A({\mu}))$.

\medskip
\noindent
d) We employ the Gershgorin circle theorem for infinite matrices. Any $\mu_k$ constructed in c) is an eigenvalue of the infinite matrix $\K = (\K_{l,j})_{l,j \in \N}$, given by
\begin{align*}  
(\K v)_l & = -\frac{A_l(0)}{(2l-1)(2l+2)}v_l \\
&\hphantom{=}  - \frac{B_l}{\sqrt{(2l-3)2l(2l-1)(2l+2)}} v_{l-1} - \frac{C_l}{\sqrt{(2l+1)(2l+4)(2l-1)(2l+2)}}v_{l+1},
\end{align*}
for any $l\ge 1$, where we have set $B_1=0$.

Moreover, any eigenvector belongs to $\ell^1$ by b). Hence \cite[Theorem 1 (a)]{MR910985} applies and we deduce that 
\begin{equation*}
    \mu_k \in \bigcup_{l = 1}^\infty \left[\K_{l,l}-r_l,\K_{l,l}+r_l\right], \quad r_l = \sum\limits_{\substack{j = 1\\ j \neq l}}^\infty \abs{\K_{l,j}},
\end{equation*}
hence $r_l$ is the column sum. In view of the monotonicity of the eigenvalues and  the fact that the matrix is non-zero only on the diagonal and  the two off-diagonals, we then have
\[
\mu_k\le \mu_1 \le \max_{l \in \N} \left(\K_{ll} +r_{l}\right) = \max_{l \in \N} \left(   \K_{l,l} + |\K_{l,l-1}| +|\K_{l,l+1}| \right),
\]
where we have set $\K_{1,0}=0$.}
We compute
\begin{align*}
  \mel  \kappa(l):=  \K_{l,l} + |\K_{l,l-1}|   + |\K_{l,l+1}|  \\
  &= \frac1{\sqrt{(2l-1)(2l+2)}}\left(   \frac{|A_l(0)|}{\sqrt{(2l-1)(2l+2)}} + \frac{B_l}{\sqrt{2l(2l-3)}} + \frac{C_l}{\sqrt{(2l+1)(2l+4)}} \right).
\end{align*}
On the one hand, we observe that  $|A_l(0)|\le l$ and $B_l,C_l\le l/2$ for any $l\ge 2$, and thus 
\begin{align*}
\mel     \kappa(l)
     \le \frac{l}{\sqrt{(2l-1)(2l+2)}}\left(   \frac{1}{\sqrt{(2l-1)(2l+2)}} + \frac{1}{2\sqrt{2l(2l-3)}} + \frac{1}{2\sqrt{(2l+1)(2l+4)}} \right)
\end{align*}
for any $l\ge 2$. The right-hand side is a monotone decreasing function of $l$, and evaluating at $l=5$ we find $\kappa(l) <0.1$   for any $l\ge 5$. On the other hand, evaluating $\kappa(l)$ for $l\in\{1,\dots,4\}$, we find that its maximal value is attained at $l=2$, hence
\[
\mu_1 \le \kappa(2) =  \frac{\sqrt{\frac{2}{5}}}{21}+\frac{\sqrt{\frac{5}{13}}}{22}+\frac{127}{2079} \approx 0.119394.
\]

\medskip
\noindent
e) As $\A({\mu_k})$ is a Fredholm operator of index $0$ having a one-dimensional nullspace, its range has codimension $1$. We choose  $v^k\in N(\A({\mu_k}))$ and  derive from the symmetry of $\mathcal{A}$ that for all $v\in \h^{\alpha+2}$ we have
\begin{align*}
\scalar{v^k}{\A({\mu_k})v}=\scalar{\A({\mu_k})v^k}{v}=0.
\end{align*}
This shows that  $v^k$ is orthogonal to the range of $\A({\mu_k})$. However, by the positive definiteness of $\mu_k \A^1 = \mathrm{D}_{\mu}\A({\mu})|_{\mu = \mu_k}$ we have that
\begin{align*}
\scalar{v^k}{\mathrm{D}_{\mu}\A(\mu)\big|_{\mu=\mu_k}v^k}=\mu_k \scalar{v^k}{\A^1 v^k}>0,
\end{align*}
hence the vector $\mathrm{D}_\mu\A(\mu)|_{\mu = \mu_k} v^k $ cannot be in the range of $\A({\mu_k})$.
\end{proof}

\section{Proof of Theorem \ref{thm:bubbles}}
\label{sec:proofs}
The main result is a direct consequence of the implicit function theorem and the theorem of Crandall and Rabinowitz. For the readers' convenience, we provide the following calculation. Using the explicit formulas for the spherical harmonics 
\begin{equation*}
	  Y_0^0(\theta)=\frac{1}{\sqrt{4\pi}} \; \mbox{ and } \; Y_2^0(\theta)=\frac{1}{4} \sqrt{\frac{5}{\pi }} \left(3 \cos ^2(\theta)-1\right),
\end{equation*}
and the Pythagorean theorem, we find
\begin{equation}\label{45}
	 \sin^2(\theta)=\frac{4\sqrt{\pi}}{3}Y_0^0(\theta)-\frac{4\sqrt{\pi}}{3\sqrt{5}}Y_2^0(\theta).
\end{equation}
Furthermore, we recall the theorem of Crandall and Rabinowitz, see \cite{Crandall1971-wb}, in the formulation of \cite{Wahlen2006-qf} but stated for Banach manifolds.

\begin{theorem} \label{thm:CR}
Let $M$ be a smooth Banach manifold and $Y$ be a Banach space, $I \subset \R$ some open interval, and let $\mathcal{G}\colon I \times M \to Y$ be continuous. Let $w_0\in M$. Assume that:
\begin{enumerate}
    \item   $\mathcal{G}(\lambda,w_0)=0$ for all $\lambda\in I$.
    \item   The Fr\'echet derivatives $\mathrm{D}_\lambda \mathcal{G}$, $\mathrm{D}_w \mathcal{G}$, $\mathrm{D}^2_{\lambda w} \mathcal{G}$ exist and are continuous.
    \item   There exists $\lambda^*\in I$ and $w^*\in T_{w_0}M$ such that $N(\mathrm{D}_w \mathcal{G}(\lambda^*,w_0))=\Span(w^*)$ and $Y/R(\mathrm{D}_w \mathcal{G}(\lambda^*,w_0))$ is $1$-dimensional.
    \item   $\mathrm{D}^2_{\lambda w} \mathcal{G}(\lambda,w)|_{(\lambda,w) = (\lambda^*,w_0)}w^*\notin R(\mathrm{D}_w \mathcal{G}(\lambda^*,w)|_{w = w_0})$.
\end{enumerate}
Then there exists a continuous local bifurcation curve $\{(\lambda(s), w(s)):|s|<\varepsilon\}$ with $\varepsilon$ sufficiently small such that $(\lambda(0), w(0))=\left(\lambda^*, w_0\right)$ and
\begin{equation*}
    \{(\lambda, w) \in U : w \neq w_0,  \G(\lambda, w)=0\}=\{(\lambda(s), w(s)): 0<|s|<\varepsilon\}
\end{equation*}
for some neighbourhood $U$ of $\left(\lambda^*, w_0\right) \in I \times M$. Moreover, we have
\begin{equation*}
    w(s)=w_0 + s w^*+o(s) \quad \text { in } M, \; |s|<\varepsilon.
\end{equation*}
\end{theorem}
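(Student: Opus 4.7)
\textbf{Proof proposal for Theorem \ref{thm:CR}.} The plan is to reduce the statement to the classical Crandall--Rabinowitz theorem in Banach spaces via a local chart on $M$, and then to run the standard Lyapunov--Schmidt reduction combined with the implicit function theorem. Since $M$ is a smooth Banach manifold, there exists a chart $\Phi\colon U\to X$ around $w_0$, where $X$ is the model Banach space, with $\Phi(w_0)=0$ and $\D\Phi(w_0)\colon T_{w_0}M\to X$ an isomorphism. Pulling back, we consider the map $\widetilde\G(\lambda,x):=\G(\lambda,\Phi^{-1}(x))$, which inherits continuity and the required Fr\'echet differentiability hypotheses from $\G$. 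Via this identification the assumptions translate to: $\widetilde\G(\lambda,0)=0$ for all $\lambda\in I$, $N(\D_x\widetilde\G(\lambda^*,0))=\Span(x^*)$ with $x^*:=\D\Phi(w_0)w^*$, $Y/R(\D_x\widetilde\G(\lambda^*,0))$ is one-dimensional, and transversality $\D_{\lambda x}^2\widetilde\G(\lambda^*,0)x^*\notin R(\D_x\widetilde\G(\lambda^*,0))$ holds. Hence it suffices to prove the theorem in the Banach space setting, and the bifurcation curve in $M$ is recovered by setting $w(s):=\Phi^{-1}(x(s))$.

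For the Banach space step, split $X=\Span(x^*)\oplus X_1$ with a closed complement $X_1$, and $Y=R(L^*)\oplus Z$ with a one-dimensional complement $Z$, where $L^*:=\D_x\widetilde\G(\lambda^*,0)$. Let $P\colon Y\to R(L^*)$ be the associated continuous projection and write $Q:=\Id-P$. Introduce the division-type substitution
\begin{equation*}
F(\lambda,s,v):=
\begin{cases}
\dfrac{1}{s}\,\widetilde\G\bigl(\lambda,\,s(x^*+v)\bigr), & s\neq 0,\\[1.5mm]
\D_x\widetilde\G(\lambda,0)(x^*+v), & s=0,
\end{cases}
\end{equation*}
for $v\in X_1$. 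Using $\widetilde\G(\lambda,0)=0$ and the fundamental theorem of calculus, $F$ can be rewritten as
\begin{equation*}
F(\lambda,s,v)=\int_0^1 \D_x\widetilde\G\bigl(\lambda,ts(x^*+v)\bigr)(x^*+v)\,\dx t,
\end{equation*}
which shows that $F$ is continuous and $C^1$ in $(s,v)$ with $\D_{\lambda}F$ continuous, and that for $s\neq 0$ the equation $\widetilde\G(\lambda,s(x^*+v))=0$ is equivalent to $F(\lambda,s,v)=0$.

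Next apply the implicit function theorem to $PF\colon I\times\R\times X_1\to R(L^*)$ at $(\lambda^*,0,0)$. We have $PF(\lambda^*,0,0)=P L^* x^* =0$, and $\D_v(PF)(\lambda^*,0,0)=P L^*|_{X_1}=L^*|_{X_1}$ is an isomorphism from $X_1$ onto $R(L^*)$ by the Lyapunov--Schmidt decomposition. This yields a $C^1$-branch $v=v(\lambda,s)$ with $v(\lambda^*,0)=0$ solving $PF(\lambda,s,v(\lambda,s))=0$ near $(\lambda^*,0)$. The remaining one-dimensional equation
\begin{equation*}
g(\lambda,s):=QF(\lambda,s,v(\lambda,s))=0
\end{equation*}
takes values in $Z\cong\R$. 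Its derivative with respect to $\lambda$ at $(\lambda^*,0)$ equals
\begin{equation*}
\D_\lambda g(\lambda^*,0)=Q\,\D_{\lambda x}^2\widetilde\G(\lambda^*,0)x^*,
\end{equation*}
which is nonzero by the transversality assumption~(4). A further application of the implicit function theorem yields $\lambda=\lambda(s)$ with $\lambda(0)=\lambda^*$, giving the desired continuous curve. Setting $x(s):=s(x^*+v(\lambda(s),s))$ produces $x(s)=sx^*+o(s)$ in $X$, which transfers back through $\Phi^{-1}$ to $w(s)=w_0+sw^*+o(s)$ in $M$. The local uniqueness statement — that every nontrivial zero in a neighborhood $U$ of $(\lambda^*,w_0)$ lies on this curve — follows because the reduction procedure accounts for all solutions with $w\neq w_0$ once $s$ is taken as the coordinate along $\Span(x^*)$, and because the two implicit function theorem applications provide unique branches.

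The main obstacle is the regularity of $F$ at $s=0$: one must verify that the integral representation above indeed gives a $C^1$ joint dependence on all three arguments, using continuity of $\D_x\widetilde\G$ and $\D_{\lambda x}^2\widetilde\G$. This is routine but slightly delicate because differentiability in $\lambda$ only at $C^1$ level is assumed (rather than $C^2$), so the second mixed derivative is used only in checking transversality and nondegeneracy of the scalar equation. A secondary technical point is the transfer of the transversality hypothesis through the chart $\Phi$, which follows from the chain rule since $\D\Phi(w_0)$ is an isomorphism and preserves both the nullspace and range structure under pullback.
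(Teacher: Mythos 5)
The paper does not prove this theorem at all: it is quoted from Crandall--Rabinowitz \cite{Crandall1971-wb} in the formulation of \cite{Wahlen2006-qf}, transferred to Banach manifolds, so there is no in-paper proof to match. Your proposal reconstructs the classical argument, and the existence part is essentially correct: the chart reduction is harmless, the range $R(L^*)$ is closed and complemented because it has finite codimension (so the projections $P$, $Q$ exist), $L^*|_{X_1}\colon X_1\to R(L^*)$ is an isomorphism by the open mapping theorem, the integral representation of $F$ gives the needed regularity from hypothesis (2), and your two-step reduction (first solve $PF=0$ for $v(\lambda,s)$, then the scalar equation $QF=0$ for $\lambda(s)$, using $Q\,\D_vF(\lambda^*,0,0)=0$ and transversality) is a legitimate variant of the usual single application of the implicit function theorem to $(\lambda,v)\mapsto F(\lambda,s,v)$. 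The asymptotics $w(s)=w_0+sw^*+o(s)$ follow as you say.

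The genuine gap is the local uniqueness statement, i.e.\ that \emph{every} nontrivial zero in a neighbourhood $U$ lies on the curve. Your ansatz $x=s(x^*+v)$ with $v\in X_1$ small only parametrizes points whose component along $x^*$ dominates; a priori there could be zeros $(\lambda,x)$ with $x\neq0$ whose kernel component $\alpha$ vanishes or satisfies $\|x_1\|\not\ll|\alpha|$ (writing $x=\alpha x^*+x_1$, $x_1\in X_1$), and these are simply not seen by your substitution, so "the reduction accounts for all solutions" is asserted rather than proved. The standard fix is a Lyapunov--Schmidt step applied to the \emph{undivided} map: solve $P\widetilde\G(\lambda,\alpha x^*+x_1)=0$ for $x_1=\psi(\lambda,\alpha)$ by the implicit function theorem, note $\psi(\lambda,0)=0$ and $\partial_\alpha\psi(\lambda^*,0)=-\bigl(L^*|_{X_1}\bigr)^{-1}PL^*x^*=0$, so that on any zero near $(\lambda^*,0)$ one has $\|x_1\|\le o(1)\,|\alpha|$; in particular $\alpha=0$ forces $x=0$, and every nontrivial zero is of the form $s(x^*+v)$ with $s=\alpha\neq0$ and $\|v\|$ small, after which the uniqueness clauses of your two implicit function theorem applications do close the argument. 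Without this step the claimed characterization of the zero set in $U$ does not follow from what you wrote.
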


\begin{proof}[Proof of Theorem \ref{T2}]We introduce
\[
\Gamma = \left\{ \frac{2}{9\mu_k}: k\in\N\right\},
\]
with $(\mu_k)_{k\in\N}$ as in Proposition \ref{prop A}, and we write $\gamma_k = 2/(9\mu_k)$ for any integer $k$.

\medskip

\noindent

\textbf{(1)} We start by constructing  solutions for values $\gamma \not\in \Gamma$. We consider for any two fixed parameters $\delta^{\mathrm{in}},\delta^{\mathrm{out}} \in \R$ the mapping $\G_{\gamma} \colon \R \times \M^{\alpha+2} \to \H^{\alpha}_\sym(\Sp^2) /_\const$ defined by
    \begin{equation*}
       \G_{\gamma}(\eps,\eta) = \F\left( \gamma + \eps \delta^{\mathrm{in}} , \gamma+\eps \delta^{\mathrm{out}},\eta\right).
    \end{equation*}
   We recall that $\eta=0$ gives a solution of the jump equation \eqref{41} precisely if $\wein=\weout$, cf.~\eqref{trivial 0}. Therefore, we have $\G_{\gamma}(0,0)=\const$.
 Moreover, the mapping $\G_\gamma$ is continuously Fr\'echet differentiable in $\eta$ by Proposition \ref{P1} with derivative
    \begin{equation*}
        \D_\eta  \G_\gamma(0,\eta) |_{\eta = 0} = \frac{9}{2} \gamma \A\left( \frac{2}{9\gamma} \right) \colon T_0\M^{\alpha+2} \to \H^\alpha_{\mathrm{sym}}(\Sp^2) /_\const.
    \end{equation*}
which equals $\mathcal{A}^1$ for $\gamma=0$.

Since $\gamma \notin \Gamma$, by Proposition \ref{prop A} and Lemma \ref{lem:deriinv}, this operator is an isomorphism, and thus,   the implicit function theorem yields the existence of a constant $\epsilon_0 >0$ and of a unique continuously differentiable mapping $(-\epsilon_0,\epsilon_0) \to \M^{\alpha+2}$, $\epsilon \mapsto \eta_\epsilon$ such that $\G_\gamma(\eps,\eta_\eps) = \const$ is satisfied. 

In particular, if we choose $\din=0$ and $\dout=1$, the construction yields a small open interval $I_{\gamma}$ around $\gamma$, such that for any $\We\in I_{\gamma}$, there exists a unique $\eta = \eta(\gamma,\We)\in \M^{\alpha+2}$ so that the jump equation \eqref{41} is satisfied, $\F(\gamma,\We,\eta)=\const$. The uniqueness entails that for $\gamma=\We$, we recover the sperical vortex, $\eta(\gamma,\gamma)=0$. On the other hand, we recall that the trivial function $\eta=0$ solves \eqref{41}  precisely if condition \eqref{19} is satisfied.
Hence, if $\We\in I_{\gamma}\setminus\{\gamma\}$, our construction yields indeed non-spherical solutions, $\eta(\gamma,\We)\not=0$.

Let us now derive the asymptotic expansion of $\eta_\eps$ in the particular case where $\gamma=0$ and $\din,\dout\ge0$. As we are working modulo constants we have, as computed in \eqref{eq:curlpsiin0s1}, \eqref{eq:curlpsiout0s1} and \eqref{45},
\begin{equation} \label{eq:G2eps0}
    \G_0(\eps,0) = {\eps\frac{9}8} \left( \delta^{\text{in}} -  \delta^{\text{out}}\right)\sin^2\theta =    {-\eps \frac32\sqrt{\frac{\pi}5}(\din-\dout) Y_2^0(\theta)} 
\end{equation}
in $\H^\alpha(\Sp^2)/_\const$. Taking the derivative of the jump equation $\G_0(\eps,\eta_\eps) = 0$ (considered in the quotient space) and using Lemma \ref{lem:Dcurv} to note
\[
\mathrm{D}_{\eta} \G_\gamma(0,\eta)|_{\eta=0} = \mathrm{D}_{\eta}\cC(\eta)|_{\eta=0} = -(\laplace_{\Sp^2}+2\Id),
\]
which is invertible by  Lemma \ref{lem:deriinv}, 
we thus obtain that
\begin{align*}
    \D_{\eps}\eta_{\eps}|_{\eps=0}& = -\la (\D_{\eta}\cC(\eta)|_{\eta=0})^{-1},\D_{\eps}\G_0(\eps,0)|_{\eps=0}\ra \\
    &= -\frac32\sqrt{\frac{\pi}5}(\din-\dout) (\Delta_{\Sp^2} +2\Id)^{-1}Y_2^0\\
    & = \frac38\sqrt{\frac{\pi}5}(\din-\dout)  Y_2^0.
\end{align*}
Note that $\eta_\eps$ preserves the volume constraint at first order in $\eps$, and thus, in view of Lemma \ref{lem:M}, this derivative is an admissible tangent vector. Invoking \eqref{45} once more, we rewrite the latter as 
\[
\D_{\eps}\eta_{\eps}|_{\eps = 0} = \frac{3}{32}(\din-\dout)\left(3\cos^2\theta-1\right),
\]
which yields the asymptotic formula in the theorem.

We obtain the smoothness of $\eta$, because by construction and uniqueness, we have $\eta\in \H^{\beta}(\Sp^2)$ for any $\beta\ge 0$. The regularity can be upgraded to analyticity by using the general theory for elliptic free boundary problems, e.g., in \cite[Thm.\ 3.1]{KochLeoni}.

\medskip

\noindent
\textbf{(2)} We turn to the case where $\gamma=\gamma_k$. Our goal is to construct bifurcations from the spherical vortex solutions $\F(\gamma,\gamma,0) = \const$, cf.~\eqref{trivial 0}.
This time we consider the mapping 
    \begin{equation*}
        \cH(\gamma,\eta) = \F\left( \gamma,\gamma,\eta\right). 
    \end{equation*}
In  Proposition \ref{P1}, we have proven  that $\cH \colon \R \times \M^{\alpha+2} \to \H^\alpha_{\mathrm{sym}}(\Sp^2)/_\const$ is differentiable with derivative
    \begin{equation} \label{eq:DetaG}
        \D_\eta \cH(\gamma,\eta)|_{\eta = 0} = \frac{9}{2} \gamma \A\left( \frac{2}{9\gamma} \right) \colon T_0\M^{\alpha+2} \to \H^\alpha_{\mathrm{sym}}(\Sp^2) /_\const.
    \end{equation}
    We recall that we made the identification $\mu = \frac{2}{9\gamma} $. We claim that at each value $\gamma=\wein_k$, the theorem of Crandall and Rabinowitz is applicable. Indeed, we verify the assumptions of the theorem:
    \begin{enumerate}
        \item   We have $\cH(\gamma,0) = \const$ for all $\gamma \in \R$, cf.~\eqref{trivial 0}.
        \item   We have proven the continuous Fr\'echet differentiability with respect to $\eta$ in Proposition \ref{P1}. The derivatives with respect to $\gamma$ exist because $\mathcal{H}$ is affine in $\gamma$.
        \item   These properties are the content of Proposition \ref{prop A}.
        \item   The transversality property is precisely part e) of Proposition \ref{prop A}.
    \end{enumerate}
    Therefore, an application of Theorem \ref{thm:CR} yields the existence of a unique bifurcation  curve $s\mapsto (\gamma(s),\eta(s))\in I_k\times \M^{\alpha+2}$ with $(\gamma(0),\eta(0)) = (\gamma_k,0)$ such that $\cH(\gamma(s),\eta(s)) = \const$ and $\eta(s)\not=0$ for $s\not=0$. Here, $I_k$ is a small interval around $\gamma_k$.
    
    The regularity of the shape function $\eta$ is obtained by the same argument as in (1).   

\medskip
\noindent
We finally notice that by Proposition \ref{prop A} we obtain 
\begin{equation*}
    \gamma_1 \ge \frac{60060}{16510+ 2574 \sqrt{10}+945 \sqrt{65}} \approx 1.861. \qedhere
\end{equation*}
\end{proof}

\begin{remark} \label{rem:asymptoticG2}
     The identity in \eqref{eq:G2eps0} holds true for any $\gamma>0$. However, we are not able to solve $\langle \mathrm{D}_\eta\cH(\gamma,\eta)|_{\eta = 0}, v \rangle = \frac{9}{2}\gamma \A\left(\frac{2}{9\gamma} \right)v = Y_2^0$ explicitly for $v$. For this reason, we cannot provide an explicit asymptotic expansion in that situation.
\end{remark}

\bibliography{steady_bubbles.bib}
\bibliographystyle{abbrv}
\vfill
\end{document}